\theoremstyle{plain}
\newtheorem{theorem}{Theorem}[section]
\newtheorem{thm}[theorem]{Theorem}
\newtheorem{corollary}[theorem]{Corollary}
\newtheorem{cor}[theorem]{Corollary}
\newtheorem{proposition}[theorem]{Proposition}
\newtheorem{lemma}[theorem]{Lemma}
\theoremstyle{remark}
\newtheorem*{remark}{Remark}
\theoremstyle{definition}
\newtheorem*{definition}{Definition}
\newcommand{\id}{{{\mathchoice {\rm 1\mskip-4mu l} {\rm 1\mskip-4mu l}
{\rm 1\mskip-4.5mu l} {\rm 1\mskip-5mu l}}}}
\newcommand{\N}{{\mathbb{N}}}
\newcommand{\Z}{{\mathbb{Z}}}
\newcommand{\R}{{\mathbb{R}}}
\newcommand{\C}{{\mathbb{C}}}
\newcommand{\Q}{{\mathbb{Q}}}
\renewcommand{\P}{{\mathbb{P}}}
\newcommand{\comment}[1]{}
\DeclareMathOperator{\lcm}{lcm}
\begin{document}

\title[Displaceability and the mean Euler characteristic]
{Displaceability and the mean Euler characteristic}
\author{Urs Frauenfelder}
\author{Felix Schlenk}
\author{Otto van Koert}
\address{
    Urs Frauenfelder\\
    Department of Mathematics and Research Institute of Mathematics\\
    Seoul National University}
\email{frauenf@snu.ac.kr}
\address{
Felix Schlenk\\
Institut de Math\'ematiques,
Universit\'e de Neuch\^atel,
Rue \'Emile Argand~11,
2000 Neuch\^atel, Switzerland}
\email{schlenk@unine.ch}

\address{Otto van Koert\\
Department of Mathematics and Research Institute of Mathematics\\
Seoul National University}
\email{okoert@snu.ac.kr}

\keywords{displaceable, contact embedding, mean Euler characteristic, equivariant symplectic homology, homology sphere, Brieskorn manifold}

\begin{abstract}
In this note we show that the mean Euler characteristic of equivariant symplectic homology is an effective obstruction against the existence of displaceable exact contact embeddings.
As an application we show that certain Brieskorn manifolds do not admit displaceable exact contact embeddings.
\end{abstract}

\maketitle

\section[Introduction]{Introduction}

A contact manifold $(\Sigma,\xi)$ is said to admit an \emph{exact contact embedding}
if there exists an embedding $\iota \colon \Sigma \to V$ into an exact symplectic manifold
$(V,\lambda)$

and a contact form $\alpha$ for~$(\Sigma,\xi)$ such that
$\alpha - \iota^*\lambda$ is exact, and such that $\iota (\Sigma) \subset V$ is bounding.
In this paper we suppose, in addition, that any target manifold $(V,\lambda)$ is convex,
i.e., there exists an exhaustion $V= \bigcup_{k} V_k$ of~$V$ by compact sets $V_k \subset V_{k+1}$
with smooth boundary such that $\lambda |_{\partial V_k}$
is a contact form,
and that the first Chern class of $(V, \lambda)$ vanishes on $\pi_2(V)$.
An exact contact embedding  is called \emph{displaceable} if $\iota (\Sigma)$
can be displaced from itself by a Hamiltonian isotopy of~$V$.
We refer to~\cite{CF} for more details on exact contact embeddings,
and for examples and obstructions to such embeddings.

The \emph{mean Euler characteristic} of a simply-connected contact manifold
was introduced in the third author's thesis~\cite{vK}
in terms of contact homology,
and was studied further in~\cite{E,GK}.
Here, we shall consider the mean Euler characteristic of equivariant symplectic homology,
which can be thought of as the mean Euler characteristic of a filling.
For the definition see Section~\ref{s:mean}.
Under additional assumptions, these notions coincide,
see Corollary~\ref{cor:mec_invariant} and the subsequent remark.

We say that a simply-connected cooriented contact manifold $(\Sigma,\alpha)$ is
\emph{index-positive} if the mean index $\Delta(\gamma)$ of every periodic Reeb orbit~$\gamma$ is positive.
Similarly, we say that $(\Sigma,\alpha)$ is
\emph{index-negative} if the mean index $\Delta(\gamma)$ of every periodic Reeb orbit $\gamma$ is negative.
Finally, we say that $(\Sigma,\alpha)$ is \emph{index-definite}
if it is index-positive or index-negative.
Recall that
the mean index $\Delta$ is related to the Conley--Zehnder index $\mu_{CZ}$ as follows:
For any non-degenerate Reeb orbit $\gamma$ in a contact manifold $(\Sigma^{2n-1},\alpha)$, its $N$-fold cover $\gamma^N$ satisfies
\begin{equation}
\label{eq:iteration_and_mean_index}
\mu_{CZ}(\gamma^N) \,=\, N \Delta(\gamma) + e(N),
\end{equation}
where $e(N)$ is an error term bounded by~$n-1$, see~\cite{SZ}[Lemma 3.4].

In this note we prove the following theorem.

\medskip
\textbf{Theorem~A. }
{\it
Assume that $(\Sigma,\xi)$ is a $(2n-1)$-dimensional simply-connected contact manifold
which admits a displaceable exact contact embedding.
Suppose furthermore that $(\Sigma,\alpha)$ is index-definite for some $\alpha$ defining $\xi$.
Then the following holds.

\begin{itemize}
\item[{\rm (i)}] $(\Sigma,\alpha)$ is index-positive, and the mean Euler characteristic of its filling is a half-integer.

\smallskip
\item[{\rm (ii)}]
If, in addition, $(\Sigma,\alpha)$ is a rational homo\-logy sphere,
then the mean Euler characteristic of its filling equals~$\frac{(-1)^{n+1}}{2}$.
\end{itemize}
}

\medskip
Given positive integers $a_0,\ldots,a_n$ one can define a Brieskorn manifold $\Sigma(a_0,\ldots,a_n)$
as the link of a certain singularity.
Such a Brieskorn manifold $\Sigma(a_0,\ldots,a_n)$ is said to be a \emph{non-trivial Brieskorn sphere}
if $a_i \neq 1$ for all~$i$.
If $a_0,\ldots,a_n$ are pairwise relatively prime, and if $n>2$,
then $\Sigma(a_0,\ldots,a_n)$ is homeomorphic to~$S^{2n-1}$.
Brieskorn manifolds carry a natural contact structure.
A trivial Brieskorn manifold is a round sphere with its standard contact structure in~$\R^{2n}$,
and hence admits a displaceable exact contact embedding.

\medskip
\textbf{Corollary~B. }
\emph{
A non-trivial Brieskorn sphere $\Sigma(a_0,\ldots,a_n)$ of dimension
at least~$5$ whose exponents are pairwise relatively prime
does not admit a displaceable exact contact embedding.
In particular, it does not admit an exact contact embedding
into a subcritical Stein manifold
whose first Chern class vanishes.}

The restriction to manifolds of dimension at least~$5$ comes from the following observations.

\begin{remark}
In dimension $3$, non-trivial Brieskorn spheres are not simply-connected.
Hence Reeb orbits in these manifolds can become contractible in the filling even
if they are not contractible in the Brieskorn manifold.
The mean Euler characteristic of symplectic homology, on the other hand, counts Reeb orbits that are contractible in the filling,
so it cannot be determined by just considering the contact manifold by itself.
\end{remark}

We conclude this introduction with a few open problems.

\smallskip
1.
Does Corollary~B still hold true if we drop the convexity assumption on the target manifold $(V,\lambda)$,
or the assumption that its first Chern class vanishes?

\smallskip
2.
Ritter proved in \cite{R} that the displaceability of $\Sigma$ implies the vanishing of
the symplectic homology of the filling~$W$.
It is conceivable that then in fact the equivariant symplectic homology
of~$W$ vanishes.
This would imply that the assumption in Theorem~A
that $(\Sigma, \alpha)$ is index-definite can be omitted,
see Remark~\ref{rem:definite}.

\section[The mean Euler characteristic]{The mean Euler
characteristic} \label{s:mean}

Assume that $(W,\lambda)$ is a compact exact symplectic manifold,
i.e.\ $\omega=d\lambda$ is a symplectic form on~$W$, with convex
boundary $\Sigma=\partial W$.
We assume throughout that
the first Chern class $c_1(W)$ of $(W,d\lambda)$ vanishes on $\pi_2(W)$, and that
$\Sigma$ is simply connected.
For $i \in \mathbb{Z}$ we denote by
$$
b_i(W) \,=\, \dim \big(SH^{S^1,+}_i(W;\mathbb{Q})\big)
$$
the $i$-th Betti number of the positive part of the equivariant
symplectic homology of~$W$
(as defined in~\cite{BO,V}).

For later use, we shall call a homology $H_*(C_*,\partial)$ \emph{index-positive} if there exists $N$ such that $H_i(C_*,\partial)=0$ for all $i<N$.
Note here that if $(\Sigma,\alpha)=\partial(W,d\lambda)$ is index-positive in the previously defined sense, then $SH^{S^1,+}_*(W)$ is index-positive in the homological sense.
The notions index-negative and index-definite are defined on homology level in a similar way.

\begin{definition}
$W$ is called \emph{homologically bounded} if the Betti numbers~$b_i(W)$ are uniformly bounded.
\end{definition}

If $W$ is homologically bounded we define its \emph{mean Euler characteristic} as
$$
\chi_m(W) \,=\, \lim_{N \to \infty} \frac{1}{N} \sum_{i=-N}^N (-1)^i b_i(W).
$$
The uniform bound on the Betti numbers implies that the limit exists.

\medskip
Now assume that $(\Sigma,\lambda)$ is a contact manifold with the property that
all closed Reeb orbits are non-degenerate.
We recall that a closed Reeb orbit $\gamma$ is called~\emph{bad} if
it is the $m$-fold cover of a Reeb orbit~$\gamma'$ and the difference of Conley--Zehnder indices
$\mu(\gamma)-\mu(\gamma')$ is odd.
A closed Reeb orbit which is not bad is called~\emph{good}.

\begin{definition}
$\Sigma$ is called \emph{dynamically bounded} if there exists a uniform bound for the number of
good closed Reeb orbits of Conley--Zehnder index~$i$ for every $i \in \mathbb{Z}$.
\end{definition}

We denote by $\mathfrak{G}_N$ the set of good closed Reeb orbits of Conley--Zehnder index
lying between~$-N$ and~$N$.
If $\Sigma$ is dynamically bounded, we define its mean Euler characteristic by
$$
\chi_m(\Sigma) \,=\,
\lim_{N \to \infty} \frac{1}{N} \sum_{\gamma \in \mathfrak{G}_N} (-1)^{\mu(\gamma)}.
$$

\begin{remark}
Ginzburg and Kerman, \cite{GK}, define the positive and negative part of the mean Euler characteristic of contact homology by summing over all positive and all negative degrees, respectively.
Their mean Euler characteristic is half of the one we define.
\end{remark}

If $W$ is a compact exact symplectic manifold, we say that $W$ is dynamically bounded
if its boundary $\Sigma=\partial W$ is dynamically bounded.

\begin{thm}\label{me}
Assume that $W$ is dynamically bounded. Then it is homologically bounded and
$$
\chi_m(\partial W) \,=\, \chi_m(W).
$$
\end{thm}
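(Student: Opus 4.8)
The key input is the structural description of positive equivariant symplectic homology: for a generic choice of data making all closed Reeb orbits non-degenerate, $SH^{S^1,+}_*(W;\Q)$ is the homology of a chain complex $(C_*,\partial)$ whose generators are the \emph{good} closed Reeb orbits of $\Sigma=\partial W$, a good orbit $\gamma$ sitting in degree $\mu(\gamma)+c$ for a fixed shift $c$ depending only on $\dim W$ and the grading conventions of~\cite{BO,V}. (Bad orbits contribute trivially over~$\Q$, which is precisely why the definition of $\chi_m(\Sigma)$ sums only over $\mathfrak{G}_N$.) Thus $\dim C_i$ equals the number of good closed Reeb orbits of Conley--Zehnder index $i-c$. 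Since $\Sigma$ is dynamically bounded, there is a constant $C$ with $\dim C_i \leq C$ for all~$i$. As $b_i(W)=\dim H_i(C_*,\partial)$ is the dimension of a subquotient of~$C_i$, we obtain $b_i(W)\leq \dim C_i\leq C$ for every~$i$, so $W$ is homologically bounded and $\chi_m(W)$ is defined.

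For the equality of mean Euler characteristics I would compare the signed partial sums of the $b_i(W)$ and of $c_i:=\dim C_i$ via the Euler--Poincar\'e principle. Writing $r_i:=\rk(\partial\colon C_i\to C_{i-1})$, the identity $H_i=\ker\partial_i/\im\partial_{i+1}$ gives $b_i(W)=c_i-r_i-r_{i+1}$. Summing with signs over the window $i\in[-N,N]$ and telescoping the two rank sums, all interior rank terms cancel and only the boundary terms survive:
$$
\sum_{i=-N}^{N}(-1)^i b_i(W) \,=\, \sum_{i=-N}^{N}(-1)^i c_i \;-\;(-1)^{-N}r_{-N}\;+\;(-1)^{N+1}r_{N+1}.
$$
Since $r_{-N}\leq\min(c_{-N},c_{-N-1})\leq C$ and $r_{N+1}\leq\min(c_{N+1},c_N)\leq C$, the two boundary contributions are bounded in absolute value by~$C$, uniformly in~$N$.

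Dividing by~$N$ and letting $N\to\infty$, these boundary terms vanish, so $\chi_m(W)=\lim_{N\to\infty}\frac1N\sum_{i=-N}^N(-1)^i c_i$. It then remains to identify this limit with $\chi_m(\Sigma)$. Substituting $c_i=\#\{\gamma\ \text{good}:\mu(\gamma)=i-c\}$ and re-indexing turns the sum into $(-1)^c\sum_{\gamma:\,|\mu(\gamma)+c|\leq N}(-1)^{\mu(\gamma)}$; replacing the CZ-window $[-N-c,N-c]$ by $[-N,N]$ alters the partial sum by at most $2|c|$ degrees, each carrying $\leq C$ orbits, hence by a constant that is killed in the $\frac1N$-average. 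This yields $\chi_m(W)=(-1)^c\,\chi_m(\Sigma)$.

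The main point to pin down is therefore the parity of the degree shift~$c$: one must verify against the conventions of~\cite{BO,V} that the degree of a generator has the same parity as its Conley--Zehnder index, i.e.\ that $c$ is even, so that $(-1)^c=1$ and the two mean Euler characteristics agree on the nose rather than up to an overall sign. This parity is consistent with the model computation for the ball $B^{2n}$, where $SH^{S^1,+}$ is concentrated in degrees of a single parity and yields $\chi_m=\tfrac{(-1)^{n+1}}{2}$; but checking it in general is the one genuinely delicate bookkeeping step, the remainder being the elementary telescoping estimate above.
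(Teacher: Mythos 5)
Your proposal is correct and follows essentially the same route as the paper: the paper establishes your ``key input'' via the Morse--Bott spectral sequence for $SH^{S^1,+}_*(W;\Q)$, whose $E^2$-page places one copy of $\Q$ in total degree exactly $\mu(\gamma)$ for each good orbit $\gamma$ (the bad orbits dying because the twisted homology of $B\Z_k$ vanishes over $\Q$), and then invokes invariance of the Euler characteristic under passing to homology, which is precisely your telescoping estimate. In particular the degree shift you leave open is $c=0$ in the conventions of~\cite{BO,V}, so the two mean Euler characteristics agree on the nose.
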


\begin{cor}
\label{cor:mec_invariant}
If $W$ is dynamically bounded, then its mean Euler characteristic is independent of the filling.
\end{cor}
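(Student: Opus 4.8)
The plan is to deduce the corollary directly from Theorem~\ref{me}, whose statement already does essentially all the work; what remains is a bookkeeping observation about which data each side of the identity $\chi_m(\partial W)=\chi_m(W)$ actually depends on. First I would isolate the key point: the quantity $\chi_m(\partial W)$ is, by its very definition, computed entirely from the Reeb dynamics of the contact boundary $(\Sigma,\alpha)=\partial W$. Indeed it is the limit $\lim_{N\to\infty}\frac{1}{N}\sum_{\gamma\in\mathfrak{G}_N}(-1)^{\mu(\gamma)}$, and the set $\mathfrak{G}_N$ of good closed Reeb orbits of Conley--Zehnder index between $-N$ and $N$, together with the indices $\mu(\gamma)$, is intrinsic to $(\Sigma,\alpha)$.

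Here I would invoke the standing hypotheses that $\Sigma$ is simply connected and that $c_1$ vanishes on $\pi_2$. The former guarantees that every closed Reeb orbit is contractible and may therefore be capped by a disk inside $\Sigma$ itself, yielding a trivialization of $\xi$ along the orbit; the latter guarantees that the resulting Conley--Zehnder index is independent of the chosen capping, since the ambiguity is measured by $c_1(\xi)$ on the sphere formed from two cappings, and $c_1(\xi)=c_1(TW)|_\Sigma$ vanishes on such spheres by assumption. Consequently $\chi_m(\partial W)$ does not refer to the filling $W$ at all, and I would record this by writing $\chi_m(\partial W)=:\chi_m(\Sigma)$.

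Next I would set up the comparison. Suppose $W_1$ and $W_2$ are two compact exact symplectic fillings of the same dynamically bounded contact manifold $\Sigma$, each satisfying the standing hypotheses. Since dynamical boundedness is a property of $\Sigma=\partial W_1=\partial W_2$, both $W_1$ and $W_2$ are dynamically bounded, so Theorem~\ref{me} applies to each. It yields that both fillings are homologically bounded and that $\chi_m(W_i)=\chi_m(\partial W_i)=\chi_m(\Sigma)$ for $i=1,2$. Chaining these equalities gives $\chi_m(W_1)=\chi_m(\Sigma)=\chi_m(W_2)$, which is exactly the assertion.

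I do not expect a genuine obstacle, since the corollary is a formal consequence of Theorem~\ref{me}. The only point requiring care---and the one I would state explicitly---is the claim that $\chi_m(\partial W)$ is genuinely intrinsic to $(\Sigma,\alpha)$, i.e.\ that no hidden dependence on $W$ slips in through the normalization of the Conley--Zehnder indices. This is precisely what the simple-connectedness of $\Sigma$ and the vanishing of $c_1$ on $\pi_2$ secure, so once those are highlighted the proof is immediate.
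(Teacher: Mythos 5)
Your proof is correct and follows the same route as the paper, which offers no separate argument for this corollary precisely because it is an immediate consequence of Theorem~\ref{me}: both fillings have mean Euler characteristic equal to $\chi_m(\Sigma)$, which is intrinsic to the contact boundary. Your extra remark that simple-connectedness of $\Sigma$ and the vanishing of $c_1$ on $\pi_2$ make the Conley--Zehnder indices, and hence $\chi_m(\Sigma)$, independent of the filling is exactly the right point to single out.
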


\begin{remark}
Since the generators of the positive part of equivariant symplectic homology and contact homology
are the same, the mean Euler characteristic can also be expressed in terms of contact homology data.
This was done in the original definition in~\cite{vK}.
Note, however, that the degree of a Reeb orbit~$\gamma$ in contact homology is defined as $\mu_{CZ}(\gamma)+n-3$ if the dimension of the contact manifold is~$2n-1$.
This can result in a sign difference for the mean Euler characteristic.
\end{remark}

\textbf{Proof of Theorem~\ref{me}:}
If $\Gamma$ denotes the set of all closed Reeb orbits on $\Sigma=\partial W$,
then the critical manifold $\mathfrak{C}$ for the positive equivariant part of the
action functional of classical mechanics is given by
$$
\mathfrak{C} \,=\, \bigcup_{\gamma \in \Gamma} \gamma \times_{S^1} ES^1.
$$
If $\gamma$ is a $k$-fold cover of a simple Reeb orbit,
then the isotropy group of the action of~$S^1$ on~$\gamma$ is~$\mathbb{Z}_k$.
Therefore,
$$
\gamma \times_{S^1} ES^1 \,=\, B\mathbb{Z}_k
$$
is the infinite dimensional lens space.
The Morse--Bott spectral sequence, see \cite[Section 7.2.2]{FOOO},
tells us that there exists a spectral sequence converging to $SH_*^{S^1,+}(W;\mathbb{Q})$,
whose second page is given by
$$
E^2_{j,i} \,=\, \bigoplus_{\substack{\gamma \in \Gamma \\ \mu(\gamma)=i}}
H_j(\gamma \times_{S^1} ES^1 ;\mathcal{O}_\gamma).
$$
The twist bundle
$\mathcal{O}_\gamma$ is trivial if $\gamma$ is good, and equals the orientation bundle of the lens space
if $\gamma$ is bad, see \cite{BO1,BO,V1}.
The homology of an infinite dimensional lens space with rational coefficients equals $\mathbb{Q}$ in degree zero and vanishes otherwise.
Its homology with coefficients twisted by the orientation bundle is trivial.
Therefore the second page of the Morse--Bott spectral sequence simplifies to
$$
E^2_{j,i} \,=\, \bigoplus_{\substack{\gamma \in \mathfrak{G} \\ \mu(\gamma)=i}}
\mathbb{Q}
$$
where $\mathfrak{G} \subset \Gamma$ are the good closed Reeb orbits.
We conclude that the mean Euler characteristic of~$E^2$ coincides with $\chi_m(\Sigma)$.
Since the Euler characteristic is unchanged if we pass to homology, we deduce that $\chi_m(\Sigma)$
equals~$\chi_m(W)$.
This finishes the proof of the theorem.
\hfill $\square$

\medskip
In our application to Brieskorn manifolds,
we will compute the mean Euler characteristic for a contact form of Morse--Bott type.
Brieskorn manifolds can be thought of as Boothby--Wang orbibundles over symplectic orbifolds, since there is a contact form for which all Reeb orbits are periodic.
For such special contact manifolds the mean Euler characteristic has a particularly simple form.

We start with introducing some notation to state the result.
Consider a contact manifold $(\Sigma,\alpha)$ with Morse--Bott contact form $\alpha$ having only finitely many orbit spaces, so that we have an $S^1$-action on $\Sigma$.
Denote the periods by $T_1< \ldots< T_k$, so all $T_i$ divide~$T_k$.
Denote the subspace consisting of points on periodic Reeb orbits with period $T_i$ in $\Sigma$ by~$N_{T_i}$.

\begin{lemma}
\label{lemma:H^1_trivial}
If $H^1(N_{T_i};\Z_2)=0$,
then $H^1(N_{T_i} \times_{S^1} ES^1; \Z_2)=0$.
\end{lemma}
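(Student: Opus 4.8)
The plan is to realize $N_{T_i}\times_{S^1}ES^1$ as the total space of the Borel fibration
$$
N_{T_i}\hookrightarrow N_{T_i}\times_{S^1}ES^1\longrightarrow BS^1
$$
with fiber $N_{T_i}$ and base $BS^1=\C\P^\infty$, and to run the Leray--Serre spectral sequence with $\Z_2$-coefficients. Everything will be forced by two facts: the odd-degree cohomology of $BS^1$ vanishes, and $H^1(N_{T_i};\Z_2)=0$ by hypothesis.

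First I would record that $H^*(BS^1;\Z_2)=\Z_2[u]$ with $\deg u=2$, so that $H^1(BS^1;\Z_2)=0$. Next, since $BS^1$ is simply connected, the coefficient system on the base determined by the cohomology of the fiber is trivial, and the $E_2$-page reads
$$
E_2^{p,q}\,=\,H^p\big(BS^1;H^q(N_{T_i};\Z_2)\big).
$$
Rather than analyze the whole page, I would invoke the exact sequence of low-degree terms,
$$
0\to E_2^{1,0}\to H^1\big(N_{T_i}\times_{S^1}ES^1;\Z_2\big)\to E_2^{0,1}\to E_2^{2,0}.
$$
Here $E_2^{1,0}=H^1(BS^1;\Z_2)=0$ and $E_2^{0,1}=H^0\big(BS^1;H^1(N_{T_i};\Z_2)\big)=H^1(N_{T_i};\Z_2)=0$, so the sequence collapses to $0\to H^1(N_{T_i}\times_{S^1}ES^1;\Z_2)\to 0$, which gives the claim.

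There is essentially no obstacle in this argument: the single point that genuinely needs justification is the triviality of the local system, which is immediate from $\pi_1(BS^1)=0$, and after that the vanishing is forced with no differentials or extension problems left to resolve. Equivalently, one can argue directly on the $E_2$-page, noting that the only entries contributing to total degree one, namely $E_2^{1,0}$ and $E_2^{0,1}$, both vanish, and that every $E_\infty^{p,q}$ is a subquotient of $E_2^{p,q}$.
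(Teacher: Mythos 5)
Your proof is correct and follows essentially the same route as the paper: both run the Leray--Serre spectral sequence of the Borel fibration over $\C P^\infty$ with $\Z_2$-coefficients, use simple connectivity of the base to trivialize the local system, and observe that the two total-degree-one entries $E_2^{1,0}$ and $E_2^{0,1}$ vanish. The low-degree exact sequence you invoke is just a repackaging of the paper's direct inspection of the $E_2$-page.
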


\begin{proof}
Consider the Leray spectral sequence for $N_{T_i} \times_{S^1} ES^1$
as a fibration over~$\C P^\infty$.
As $\pi_1(\C P^\infty)=0$, the Leray spectral sequence with $\Z_2$-coefficients converges
to the cohomology of $N_{T_i} \times_{S^1} ES^1$.
The $E_2$-page is given by $E_2^{pq} = H^p (\C P^\infty; H^q(N_{T_i};\Z_2) )$.
Since $H^1(N_{T_i};\Z_2)=0$ by assumption, there are no degree~$1$-terms on $E_2$.
Hence there are no degree~$1$-terms in $E_\infty$ either,
and $H^1(N_{T_i} \times_{S^1} ES^1; \Z_2)=0$.
\end{proof}

Finally we introduce the function
$$
\phi_{T_i;T_{i+1},\ldots,T_k} \,=\,
\# \{ a \in \N \mid aT_i < T_k \text{ and } a T_i \notin T_j \N \text{ for } j=i+1,\ldots, k \}
.
$$

\begin{proposition}
\label{prop:mean_euler_S^1-orbibundle}
Let $(\Sigma,\alpha)$ be a contact manifold as above and assume that it admits an exact filling $(W,d\lambda)$.
Suppose that $c_1(\xi=\ker \alpha)=0$, so that the Maslov index is well-defined.
Let $\mu_P:=\mu(\Sigma)$ be the Maslov index of a principal orbit of the Reeb action.
Assume that $H^1(N_T\times_{S^1} ES^1; \Z_2)=0$ for all $N_T$ and that there are no bad orbits.

If $\mu_P\neq 0$ then the following hold.
\begin{itemize}
\item $(\Sigma,\alpha)$ is homologically bounded.
\item $(\Sigma,\alpha)$ is index-positive if $\mu_P>0$ and index-negative if $\mu_P<0$.
\item The mean Euler characteristic satisfies the following formula,
$$
\chi_m(W)=\frac{\sum_{i=1}^k (-1)^{\mu(S_{T_i})-\frac{1}{2}\dim S_{T_i} } \phi_{T_i;T_{i+1},\ldots T_k} \chi^{S^1}(N_T)}{|\mu_P|}.
$$
\end{itemize}
\end{proposition}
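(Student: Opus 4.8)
The plan is to compute $\chi_m(W)$ directly from the Morse--Bott spectral sequence used in the proof of Theorem~\ref{me}, exploiting that the periodic Reeb flow organizes all closed orbits by period. Since every orbit is closed with period dividing $T_k$, the critical set of the action functional at period $T$ is the closed invariant subset $C_T=\{p\in\Sigma : \text{the orbit through } p \text{ has period dividing } T\}$, which stratifies as $C_T=\bigsqcup_{i\,:\,T_i\mid T} N_{T_i}$. First I would record the contribution of each stratum: by Lemma~\ref{lemma:H^1_trivial} together with the hypotheses that there are no bad orbits and $H^1(N_{T_i}\times_{S^1}ES^1;\Z_2)=0$, the twist bundle $\mathcal{O}_\gamma$ is trivial and the $S^1$-action on $N_{T_i}$ has finite stabilizers, so rationally $N_{T_i}\times_{S^1}ES^1\simeq S_{T_i}$ and the $a$-fold iterate of $N_{T_i}$ contributes $H_*(S_{T_i};\Q)$ to the $E^2$-page, in a band of degrees whose bottom has parity $\mu(S_{T_i}^{a})-\tfrac12\dim S_{T_i}$, where $S_{T_i}^a$ denotes that iterate. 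In particular its contribution to the alternating Betti sum is $(-1)^{\mu(S_{T_i}^{a})-\frac12\dim S_{T_i}}\chi^{S^1}(N_{T_i})$.

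Next I would establish the first two bullet points. By the iteration formula~\eqref{eq:iteration_and_mean_index} and the fact that the flow is periodic, the Maslov index of the $a$-fold iterate in $N_{T_i}$ grows linearly, $\mu(S_{T_i}^{a})=a\,\mu_P/m_i+O(1)$ with $m_i=T_k/T_i$; hence if $\mu_P\neq 0$ each degree receives contributions from only finitely many iterates of the finitely many strata, giving homological boundedness, and the generators sit in degrees bounded below when $\mu_P>0$ (resp.\ bounded above when $\mu_P<0$), giving index-positivity (resp.\ index-negativity).

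For the mean Euler characteristic I would pass, as in Theorem~\ref{me}, to the $E^2$-page, so that $\chi_m(W)$ equals the density per unit degree of the alternating sum of the stratum contributions above. Since $m_i$ extra iterates amount to one extra turn around a principal orbit, the passage from the $a$-th iterate of $N_{T_i}$ to its $(a+m_i)$-th raises the Maslov index by $\mu_P$; it therefore suffices to sum the contributions arising within one degree-window of width $|\mu_P|$ and divide by $|\mu_P|$. Reorganizing by the generic, trivial-isotropy stratum of each $C_T$, the stratum $N_{T_i}$ is the top stratum of $C_{aT_i}$ precisely when $aT_i\notin T_j\N$ for every $j>i$, and the number of such iterates in one fundamental window is exactly $\phi_{T_i;T_{i+1},\ldots,T_k}$. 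Collecting the signs $(-1)^{\mu(S_{T_i})-\frac12\dim S_{T_i}}$ and Euler characteristics $\chi^{S^1}(N_{T_i})$ and dividing by $|\mu_P|$ then yields the claimed formula.

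The main obstacle is the sign and index bookkeeping in this last step. Concretely, one must show that the Cesàro average over $a$ of $(-1)^{\mu(S_{T_i}^{a})}$ equals $(-1)^{\mu(S_{T_i})}\,\phi_{T_i;T_{i+1},\ldots,T_k}/m_i$: the parity of $\mu(S_{T_i}^{a})$ must be shown to agree with that of $\mu(S_{T_i})$ on exactly the iterates counted by $\phi$, while the iterates with $aT_i\in T_j\N$ are absorbed into the larger stratum $N_{T_j}$ and must not be double-counted (the boundary case $a=m_i$, where $N_{T_i}$ reaches period $T_k$, requiring particular care). This calls for the explicit orbifold form of the iteration formula for $\mu(S_{T_i}^{a})$ via the rotation-number contributions at each stratum, both to pin down the index parity and to check that the degree-bands of distinct strata interleave without disturbing the Euler-characteristic count; the parity of $\mu_P$, which governs whether the averaged sign survives, enters here and must be tracked throughout.
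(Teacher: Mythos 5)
Your argument follows the paper's proof essentially verbatim: the same Morse--Bott spectral sequence with untwisted coefficients, homological boundedness and index-definiteness from the periodicity of the $E^2$-page in the $q$-direction with period $|\mu_P|$ (via additivity of the Robbin--Salamon index under catenation with a principal period), and the mean Euler characteristic obtained by summing one fundamental window and dividing by $|\mu_P|$, with $\phi_{T_i;T_{i+1},\ldots,T_k}$ counting the iterates of $N_{T_i}$ not absorbed into a larger orbit space. The parity bookkeeping you flag as the remaining obstacle --- that $\mu(S_{T_i}^{a})-\tfrac12\dim S_{T_i}$ has the same parity for all iterates counted by $\phi$ --- is exactly what the hypothesis that there are no bad orbits guarantees, so it is not a gap but an already-assumed input.
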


Here $\chi^{S^1}(N_T)$ denotes the Euler characteristic of the $S^1$-equivariant homology of the $S^1$-manifold~$N_T$.

\begin{proof}
We use the notation
$$
H^{S^1}_{p}(N_T;\Q) \,:=\, H_{p}(N_T\times_{S^1}ES^1;\Q) .
$$
As before, there is the Morse--Bott spectral sequence converging to $SH_*^{S^1,+}(W;\Q)$.
The second page is given by
$$
E^2_{pq}=\bigoplus_{\substack{N_T \\ \mu(S_T)-\frac{1}{2}\dim S_T=q}} H^{S^1}_{p}(N_T;\Q).
$$
Indeed, the coefficient ring is not twisted as $H^1(N_T\times_{S^1} ES^1;\Z_2)=0$.

The period of a principal orbit is $T_k$, so we have $\phi^R_{T_k}=\id$.
Since the Robbin--Salamon version of the Maslov index is additive under concatenations,
it follows that for any set of periodic orbits $N_T$ with return time $T>T_k$ we have
$$
\mu(N_T)=\mu(N_{T_k})+\mu(N_{T-T_k}).
$$
It follows that the $E^2$-page is periodic in the $q$-direction with period $|\mu(N_{T_k})|=|\mu_P|$ (as $N_{T_k}=\Sigma)$.
Since we have assumed that $\mu_P\neq 0$, we see that $SH^{S^1,+}(W)$ is homologically bounded.

Moreover, by the definition of the Maslov index~$\mu_P$,
the sign of $\mu_P$ determines whether $(\Sigma,\alpha)$ is index-positive or index-negative.

Finally, the mean Euler characteristic can be obtained by summing all contributions in one period and dividing by the period.
This gives
$$
\chi_m(W)=\frac{\sum_{T \leq T_k} (-1)^{\mu(S_T)-\frac{1}{2}\dim S_T }\chi^{S^1}(N_T)}{|\mu_P|}.
$$
Now observe that the definition of the functions $\phi_{T_i;T_{i+1},\ldots T_{k}}$ is such that it counts how often multiple covers of a set of periodic orbits~$N_{T_i}$ appear in one period of the $E^2$-page without being contained in a larger orbit space.
We thus obtain the above formula.
\end{proof}

\begin{remark}
This proposition is a generalization of \cite{E}[Example 8.2], and Espina's methods could also be used to show the above.
\end{remark}

\section[Proof of Theorem~A]{Proof of Theorem~A}

In the first two paragraphs of this section we prove three general statements
that in particular imply assertion~(i) of Theorem~A.
In paragraphs~\ref{ss:dis} and \ref{ss:rat} we then work out
the situation for rational homology spheres.

\subsection{Two general statements} \label{ss:two}
\begin{proposition}
\label{prop:equivariant}
Assume that $(\Sigma,\xi)$
is a $(2n-1)$-dimensional simply-connected contact manifold admitting a displaceable exact contact embedding into $(V,d\lambda)$.
Denote the compact component of $V \setminus \Sigma$ by~$W$.
Suppose furthermore that $(\Sigma,\alpha)$ is index-positive.
Then
$$
SH^{S^1,+}_{*}(W) \,\cong\, H_{*+n-1}^{S^1}(W,\Sigma).
$$
\end{proposition}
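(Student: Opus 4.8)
The plan is to deduce the isomorphism from a vanishing statement together with the tautological long exact sequence relating the full equivariant symplectic homology, its positive part, and the equivariant homology of the pair $(W,\Sigma)$. Recall that there is an exact sequence
$$
\cdots \to H^{S^1}_{*+n}(W,\Sigma) \to SH^{S^1}_*(W) \to SH^{S^1,+}_*(W) \xrightarrow{\delta} H^{S^1}_{*+n-1}(W,\Sigma) \to \cdots ,
$$
arising from the short exact sequence of chain complexes in which the constant orbits span a subcomplex computing the (shifted) equivariant singular homology of $(W,\Sigma)$ and the non-constant orbits span the positive quotient complex, see \cite{BO,V}. Granting this, it suffices to prove that $SH^{S^1}_*(W)=0$ for all~$*$: then every connecting homomorphism $\delta$ becomes an isomorphism, which is exactly the assertion.

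To prove the vanishing I would argue in three steps. First, since $\Sigma$ admits a displaceable exact contact embedding into $(V,d\lambda)$, Ritter's theorem \cite{R} yields that the ordinary symplectic homology of the filling vanishes, $SH_*(W)=0$ for all~$*$. Second, I would feed this into the Gysin (tautological $S^1$-) exact sequence
$$
\cdots \to SH_*(W) \to SH^{S^1}_*(W) \xrightarrow{u} SH^{S^1}_{*-2}(W) \to SH_{*-1}(W) \to \cdots
$$
of \cite{BO}: with $SH_*(W)=0$ the $u$-map is an isomorphism in every degree, so $SH^{S^1}_*(W)$ is $2$-periodic. Third, I would invoke index-positivity. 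By hypothesis $(\Sigma,\alpha)$ is index-positive, so $SH^{S^1,+}_*(W)$ vanishes for $*$ sufficiently negative; moreover, since the $S^1$-action on constant orbits is trivial, the equivariant homology $H^{S^1}_*(W,\Sigma)\cong H_*(W,\Sigma)\otimes H_*(\C P^\infty)$ is bounded below in degree, hence so is its shift $H^{S^1}_{*+n}(W,\Sigma)$. By the tautological sequence above, $SH^{S^1}_*(W)$ is then bounded below as well. A $2$-periodic graded group that vanishes in all sufficiently negative degrees must vanish identically, so $SH^{S^1}_*(W)=0$.

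Combining the two previous paragraphs, the connecting map $\delta$ in the tautological sequence is an isomorphism in every degree, giving $SH^{S^1,+}_*(W)\cong H^{S^1}_{*+n-1}(W,\Sigma)$, as claimed.

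I expect the main obstacle to be making the two analytic inputs precise in the present setting: verifying that Ritter's vanishing theorem applies to the compact component $W$ cut out by the displaceable embedding, under the standing convexity and $c_1|_{\pi_2(W)}=0$ assumptions, and that the Gysin and tautological sequences hold with the degree conventions used here. By contrast, the decisive conceptual point --- that the $2$-periodicity forced by non-equivariant vanishing, combined with the boundedness below supplied by index-positivity, collapses the full equivariant theory to zero --- is short, and it is precisely the place where the index-definiteness hypothesis of Theorem~A is used.
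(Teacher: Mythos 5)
Your argument is correct and follows the paper's proof essentially verbatim: the same Viterbo $S^1$-equivariant exact sequence, Ritter's vanishing of $SH_*(W)$, the Gysin sequence forcing $2$-periodicity of $SH^{S^1}_*(W)$, and index-positivity bounding it below so that it must vanish identically. No substantive differences.
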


\begin{corollary}
\label{cor:mean_euler}

Under the assumptions of Proposition~\ref{prop:equivariant},
$$
\chi_m(W) \,=\, (-1)^{n+1}\, \frac{\chi(W,\Sigma)}{2}.
$$
\end{corollary}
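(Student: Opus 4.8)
The plan is to substitute the isomorphism of Proposition~\ref{prop:equivariant} into the definition of $\chi_m(W)$ and thereby reduce the whole computation to the ordinary relative homology $H_*(W,\Sigma;\Q)$. First I would rewrite the Betti numbers: by Proposition~\ref{prop:equivariant},
$$
b_i(W) \,=\, \dim SH^{S^1,+}_i(W) \,=\, \dim H^{S^1}_{i+n-1}(W,\Sigma;\Q).
$$
The $S^1$-action appearing here is the loop-rotation action, which is trivial on the constant-loop locus, so the Borel construction on the pair splits rationally and $(W,\Sigma)\times_{S^1}ES^1 \simeq (W,\Sigma)\times \C P^\infty$. Writing $\beta_m = \dim H_m(W,\Sigma;\Q)$ and using that $H_*(\C P^\infty;\Q)$ is $\Q$ in each even non-negative degree and $0$ otherwise, Künneth yields the key identity
$$
\dim H^{S^1}_k(W,\Sigma;\Q) \,=\, \sum_{j\geq 0}\beta_{k-2j}.
$$

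Next I would extract the large-degree behaviour. Since $W$ is compact of dimension~$2n$, the $\beta_m$ vanish outside $0\le m\le 2n$, so for $k\geq 2n$ the sum above stabilizes to $P_{\mathrm{even}}:=\sum_{m\ \mathrm{even}}\beta_m$ when $k$ is even and to $P_{\mathrm{odd}}:=\sum_{m\ \mathrm{odd}}\beta_m$ when $k$ is odd. Hence $(-1)^k\dim H^{S^1}_k(W,\Sigma;\Q)$ equals $P_{\mathrm{even}}$ for large even~$k$ and $-P_{\mathrm{odd}}$ for large odd~$k$, so each consecutive even/odd pair of indices contributes $P_{\mathrm{even}}-P_{\mathrm{odd}}=\chi(W,\Sigma)$.

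Then I would perform the Cesàro average. Re-indexing by $k=i+n-1$ gives $(-1)^i=(-1)^{n+1}(-1)^k$, and since $H^{S^1}_k(W,\Sigma;\Q)=0$ for $k<0$,
$$
\sum_{i=-N}^N(-1)^i b_i(W) \,=\, (-1)^{n+1}\sum_{k=0}^{N+n-1}(-1)^k\dim H^{S^1}_k(W,\Sigma;\Q).
$$
Splitting off the finitely many unstable terms $k<2n$, which contribute a bounded amount independent of~$N$, and summing the stable tail, the number of even and of odd indices in $[2n,\,N+n-1]$ is each $\tfrac{N}{2}+O(1)$; thus the right-hand side equals $(-1)^{n+1}\tfrac{N}{2}\chi(W,\Sigma)+O(1)$. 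Dividing by~$N$ and letting $N\to\infty$ gives $\chi_m(W)=(-1)^{n+1}\tfrac{\chi(W,\Sigma)}{2}$, as claimed.

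The step I expect to demand the most care is the first one: justifying the splitting $H^{S^1}_*(W,\Sigma;\Q)\cong H_*(W,\Sigma;\Q)\otimes H_*(\C P^\infty;\Q)$, i.e.\ that the relevant $S^1$-action on the pair is rationally trivial, equivalently that the associated $\Q[u]$-module is free. Everything after that is elementary bookkeeping, and the factor~$\tfrac12$ is forced by the fact that only half of the indices~$k$ carry the even relative Betti numbers and half carry the odd ones.
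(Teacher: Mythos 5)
Your proposal is correct and follows the paper's own route: the paper likewise combines the isomorphism of Proposition~\ref{prop:equivariant} with the observation that the $S^1$-action on $(W,\Sigma)$ is trivial, so that $H^{S^1}_*(W,\Sigma)\cong H_*(W,\Sigma)\otimes H_*(\C P^\infty)$ consists of copies of $H_*(W,\Sigma)$ shifted by $0,2,4,\ldots$, and then averages. You have merely written out explicitly the Cesàro bookkeeping that the paper leaves implicit, and your identification of the delicate point (rational triviality of the action on the pair) matches the paper's justification via the construction of Viterbo's long exact sequence.
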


\begin{proof}[Proof of Proposition~\ref{prop:equivariant}]
Consider the $S^1$-equivariant version of the Viterbo long exact sequence,
$$
\ldots \longrightarrow H^{S^1}_{*+n}(W,\Sigma) \longrightarrow SH^{S^1}_*(W)  \longrightarrow SH^{S^1,+}_{*}(W) \longrightarrow H^{S^1}_{*+n-1}(W,\Sigma) \longrightarrow \ldots
$$
from~\cite{V,BO}.
By assumption $SH^{S^1,+}_*(W)$ is index-positive.
The group homology $H^{S^1}_{*}(W,\Sigma)$ is also index-positive, so we conclude that $SH^{S^1}_*(W)$ must be index-positive as this group is sandwiched between $0$'s for sufficiently negative~$*$.

By Ritter's theorem \cite[Theorem 97]{R} displaceability of~$\Sigma$
implies $SH_*(W)=0$.
The Gysin sequence for equivariant and non-equivariant symplectic homology from~\cite{BO} reads
$$
\ldots \longrightarrow SH_*(W) \longrightarrow SH^{S^1}_*(W)  \stackrel{D_*}{\longrightarrow} SH^{S^1}_{*-2}(W) \longrightarrow SH_{*-1} (W) \longrightarrow \ldots
$$
so all maps $D_*$ are isomorphisms.
Since we just showed that $SH^{S^1}_*(W)$ is index-positive, it must vanish in all degrees.

Finally consider the equivariant version of the Viterbo sequence once again.
Since $SH^{S^1}_*(W)$ vanishes, it follows that
$$
H_{*+n}^{S^1}(W,\Sigma) \,\cong\, SH^{S^1,+}_{*+1}(W).
$$
\end{proof}

Corollary~\ref{cor:mean_euler}
follows from Proposition~\ref{prop:equivariant}
by observing that $H^{S^1}_*(W,\Sigma) \cong H_*(W,\Sigma)\otimes H_*(\C P^\infty)$,
since the $S^1$-action on $(W,\Sigma)$ trivial
(by construction of Viterbo's long exact sequence).
In other words, $H^{S^1}_*(W,\Sigma)$ consists of infinitely many copies of $H_*(W,\Sigma)$ which are degree-shifted by $0,2,4,\ldots$

\begin{remark} \label{rem:definite}
It is conceivable that the displaceability of~$W$ implies that
$SH^{S^1}_*(W)$ vanishes.
The conclusion of Proposition~\ref{prop:equivariant},
without the assumption that $(\Sigma, \alpha)$ is index-positive,
would then follow at once from Viterbo's $S^1$-equivariant long exact sequence.
Hence, the assumption in Theorem~A
that $(\Sigma, \alpha)$ is index-definite could be omitted.
\end{remark}

\subsection{Index-positivity}

\begin{lemma}
Assume that $(\Sigma,\xi)$
is a $(2n-1)$-dimensional simply-connected contact manifold admitting a displaceable exact contact embedding into $(V,d\lambda)$.
Denote the compact component of $V \setminus \Sigma$ by $W$.
Suppose furthermore that $(\Sigma,\xi=\ker \alpha)$ is index-definite.
Then $(\Sigma,\alpha)$ is index-positive.
\end{lemma}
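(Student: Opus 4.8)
The plan is to argue by contradiction. Since $(\Sigma,\alpha)$ is index-definite it is either index-positive or index-negative, so it suffices to rule out the index-negative case. Thus I assume that every closed Reeb orbit $\gamma$ has negative mean index $\Delta(\gamma)<0$, and I aim to derive a contradiction from the displaceability of $\Sigma$ via Ritter's vanishing theorem. The key observation will be that the topology of the filling forces a homology class of $SH^+$ in a degree that is too high to be supported by any Reeb orbit once all mean indices are negative.

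For the homological input I pass to the non-equivariant theory. By Ritter's theorem \cite[Theorem 97]{R}, displaceability of $\Sigma$ gives $SH_*(W)=0$. Plugging this into the non-equivariant Viterbo long exact sequence $\cdots \to H_{*+n}(W,\Sigma)\to SH_*(W)\to SH^+_*(W)\to H_{*+n-1}(W,\Sigma)\to \cdots$ of \cite{V} (the ungraded analogue of the sequence used in Proposition~\ref{prop:equivariant}, with the same shift $n-1$) produces an isomorphism $SH^+_*(W)\cong H_{*+n-1}(W,\Sigma)$. By Lefschetz duality $H_{2n}(W,\Sigma)\cong H^0(W)=\Q$, so the right-hand side is nonzero in degree $*=n+1$; hence $SH^+_{n+1}(W)\neq 0$.

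For the dynamical input I bound the degrees of all generators. The positive symplectic homology $SH^+_*(W)$ is computed by a complex generated by the good closed Reeb orbits, each orbit $\gamma$ contributing in degrees $\mu_{CZ}(\gamma)$ and $\mu_{CZ}(\gamma)+1$ (the two critical points of a Morse function on the circle of reparametrizations). Now the iteration formula~\eqref{eq:iteration_and_mean_index} gives, for every iterate $\gamma^N$, the estimate $\mu_{CZ}(\gamma^N)=N\Delta(\gamma)+e(N)<n-1$, since $N\Delta(\gamma)<0$ and $e(N)\le n-1$. Therefore every generator sits in a degree $<n$, so that $SH^+_*(W)=0$ for all $*\ge n$. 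In particular $SH^+_{n+1}(W)=0$, contradicting the previous paragraph. Hence $(\Sigma,\alpha)$ cannot be index-negative, and being index-definite it is index-positive.

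The step to get right is the alignment of grading conventions: one must check that a Reeb orbit of Conley--Zehnder index $\mu_{CZ}(\gamma)$ really contributes to $SH^+_*(W)$ in degree $\mu_{CZ}(\gamma)$ (up to the $+1$ from the reparametrization circle), so that the dynamical ceiling ``$<n$'' genuinely undercuts the topological degree ``$n+1$'' forced by the fundamental class. This is most safely verified on the model $W=B^{2n}$, where $SH^+(B^{2n})=\Q$ lies in degree $n+1$ and is generated by the simple Hopf orbit, for which $\mu_{CZ}=n+1$. I would also emphasize that the argument is essentially non-equivariant: its equivariant analogue is obstructed, because $H^{S^1}_{*}(W,\Sigma)$ is unbounded above, so $SH^{S^1}_*(W)$ need not vanish in high degrees and the clean degree contradiction is lost. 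This is precisely why one works with $SH^+$ rather than with $SH^{S^1,+}$ directly, and it is consistent with the fact that the unconditional vanishing of $SH^{S^1}_*(W)$ remains only conjectural (Remark~\ref{rem:definite}).
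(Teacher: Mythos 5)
Your proof is correct and follows essentially the same route as the paper: Ritter's vanishing theorem plus the Viterbo exact sequence to force $SH^+_{n+1}(W)\cong H_{2n}(W,\Sigma)\neq 0$, and the iteration formula~\eqref{eq:iteration_and_mean_index} to cap the Conley--Zehnder indices of all orbits below $n-1$ in the index-negative case, yielding the contradiction. The extra care you take with the degree shift from the reparametrization circle is a reasonable refinement of the paper's terser statement that no generator of $SH^+_{n+1}(W)$ can be realized by a Reeb orbit.
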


\begin{proof}
Again by Ritter's theorem \cite[Theorem 97]{R} we conclude that $SH_*(W)=0$.
Hence the Viterbo long exact sequence
from \cite{V,BO}
reduces to
$$
\ldots \longrightarrow 0 \longrightarrow SH^+_*(W) \stackrel{\cong}{\longrightarrow} H_{*+n-1}(W,\Sigma) \longrightarrow 0 \longrightarrow \ldots,
$$
so we see that $SH^+_{n+1}(W)\cong H_{2n}(W,\Sigma)\cong H^0(W)\neq 0$.

Now suppose that $(\Sigma,\alpha)$ is index-negative.
On one hand, our previous observation shows that there is a generator of degree~$n+1$.
On the other hand, if $\alpha$ is a non-degenerate contact form,
then the iteration formula~\eqref{eq:iteration_and_mean_index}
tells us that an $N$-fold cover of a Reeb orbit~$\gamma$ satisfies
$$
|\mu_{CZ}(\gamma^N) - N\Delta(\gamma) | \,\leq\, n-1,
$$
where $\Delta(\gamma)$ denotes the mean index of the Reeb orbit $\gamma$.
Since $(\Sigma,\alpha)$ is index-negative, $\Delta(\gamma)<0$, so $\mu_{CZ}(\gamma^N)< n-1$.
In particular, no generator of $SH^{+}_{n+1}(W)$ can be realized by a Reeb orbit.
This contradiction shows that $(\Sigma,\alpha)$ must be index-positive.
\end{proof}


\subsection{Displaceability and splitting the sequence of the pair}
\label{ss:dis}

In the following lemma, $(V,\Omega)$ is a connected manifold endowed with a volume form,
and $W \subset V$ is a compact connected submanifold with connected boundary of the same
dimension as~$V$ with the property that the volume of the complement
of~$W$ in~$V$ is infinite.
We say that the hypersurface $\Sigma = \partial W \subset V$ is \emph{volume preserving displaceable}
if there exists a compactly supported smooth family of volume
preserving vector fields $X_t$, $t \in [0,1]$, on~$V$
such that the time-$1$ map~$\phi$ of its flow satisfies
$\phi(\Sigma) \cap \Sigma =\emptyset$.

\begin{lemma}\label{disp}
Assume that $\Sigma=\partial W$ is volume preserving displaceable in~$V$.
Then the projection homomorphism $p_*\colon H_*(W;\mathbb{Q}) \to H_*(W,\Sigma;\mathbb{Q})$ vanishes.
\end{lemma}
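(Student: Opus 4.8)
The plan is to reduce the statement to an intersection‑theoretic computation on $W$ and then exploit the displacement of the boundary. Write $m=\dim V=\dim W$. Since $V$ carries a volume form it is oriented, hence so are $W$ and $\Sigma=\partial W$ (with the boundary orientation), and Lefschetz duality furnishes a nondegenerate pairing $H_k(W,\Sigma;\Q)\times H_{m-k}(W;\Q)\to\Q$, $(\alpha,\beta)\mapsto\alpha\cdot\beta$. Consequently $p_*(a)=0$ for a class $a\in H_k(W;\Q)$ as soon as $p_*(a)\cdot\beta=0$ for every $\beta\in H_{m-k}(W;\Q)$, so it suffices to establish this vanishing for all $\beta$.

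First I would show that displacing the boundary forces the displacement of the whole region, namely that the time‑$1$ map $\phi$ satisfies $\phi(W)\cap W=\emptyset$. Since $\phi$ is the time‑$1$ map of a compactly supported flow starting at the identity, it is an orientation‑ and volume‑preserving diffeomorphism, so $\vol(\phi(W))=\vol(W)<\infty$, whereas $\vol(V\setminus W)=\infty$. Because $\phi(\Sigma)$ is connected and disjoint from $\Sigma$, it lies either in $\inn W$ or in $V\setminus W$; combining this with the analogous dichotomy obtained by locating the connected set $\Sigma$ relative to $\phi(W)$, one is left with three possibilities: $W$ and $\phi(W)$ are disjoint, one is contained in the interior of the other, or their boundaries are ``linked'' (each boundary lying in the other region's interior). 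The two nesting cases are excluded because a proper inclusion of codimension‑$0$ regions with disjoint boundaries strictly changes the finite volume, contradicting $\vol(\phi(W))=\vol(W)$. The linked case would make $\partial(W\cup\phi(W))$ empty, so that the compact set $W\cup\phi(W)$ is open and closed in $V$, hence equal to the connected manifold $V$; this contradicts $\vol(V)=\infty$. Only disjointness survives. I expect this step---turning the displacement of $\Sigma$ into the displacement of $W$---to be the main obstacle, precisely because it is where the connectedness hypotheses and the infinite volume of the complement are all consumed.

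It then remains to evaluate the pairing geometrically and invoke isotopy invariance. Represent $a$ and $\beta$ by transverse smooth cycles $A$ and $B$ inside $\inn W$, so that $p_*(a)\cdot\beta$ is the signed count $A\cdot B$ of their intersection points, computed equally well in $W$ or in $V$. The flow $\Phi_t$ sweeps out a compact chain $C=\Phi([0,1]\times A)$ with $\partial C=\phi(A)-A$, using that $A$ is a cycle. As $C$ and $B$ are compact and $B$ is a cycle, the intersection $C\cdot B$ is a compact $1$‑chain whose boundary is $(\partial C)\cdot B=(\phi(A)-A)\cdot B$; hence the signed count of $(\phi(A)-A)\cdot B$ vanishes and $A\cdot B=\phi(A)\cdot B$. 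But $\phi(A)\subset\phi(W)$ and $B\subset W$ are disjoint by the first step, so $\phi(A)\cdot B=0$ and therefore $A\cdot B=0$. Since $\beta$ was arbitrary, nondegeneracy of the Lefschetz pairing yields $p_*(a)=0$, and since $a$ was arbitrary, $p_*\equiv 0$.

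The only points needing routine care are the transversality perturbations making $A\pitchfork B$ and $C\pitchfork B$ generic, and the excision identification implicit in computing the intersection number in $V$ rather than in $W$; both are standard over the field $\Q$ and do not affect the signed counts, so I do not anticipate any difficulty there.
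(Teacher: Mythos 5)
Your proof is correct, and its first half coincides with the paper's: both arguments hinge on the key observation that a volume-preserving displacement of $\Sigma$ forces $\phi(W)\cap W=\emptyset$, deduced from the finiteness of $\vol(W)$, the infiniteness of $\vol(V\setminus W)$, and connectedness; your trichotomy (disjoint / nested / linked) is a slightly more economical packaging of the paper's Steps 1a--1c and, by locating the connected sets $\phi(\Sigma)$ and $\phi(W)$ relative to $\Sigma$, even avoids the paper's Mayer--Vietoris argument that $W^c$ is connected. Where you genuinely diverge is in converting $\phi(W)\cap W=\emptyset$ into the vanishing of $p_*$. The paper proves the Poincar\'e--Lefschetz dual statement, namely that the map from compactly supported de~Rham cohomology of $W$ to $H^*(W)$ vanishes, by an explicit construction: extend a compactly supported closed form $\omega$ by zero to $V$, integrate the Cartan homotopy formula along the displacing isotopy to obtain $\widetilde\omega-\phi^*\widetilde\omega=d\widetilde\eta$, and note that $\phi^*\widetilde\omega$ restricts to zero on $W$ precisely because $\phi$ displaces $W$. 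You instead stay on the homology side, pairing $p_*(a)$ against $H_{m-k}(W;\Q)$ via the nondegenerate Lefschetz intersection pairing and using the trace of the isotopy to replace $A$ by the disjoint cycle $\phi(A)$. The two mechanisms are dual to one another and both are sound; the de~Rham version has the advantage of needing no transversality or smooth-representative arguments for singular cycles (which your version must invoke, and which over $\Q$ are standard but not free), while yours makes the geometric reason for the vanishing---that intersection numbers are isotopy invariant and that $\phi(W)$ misses $W$---completely transparent.
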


\textbf{Proof: }
We prove the lemma in two steps. For the first step we need the assumption about volume preservation.
\\ \\
\textbf{Step~1: }
\emph{The volume preserving diffeomorphism $\phi$ displacing $\Sigma$ displaces the whole filling,
i.e.~ $\phi(W) \cap W=\emptyset$.}
\\ \\
We divide the proof of Step~1 into three substeps.
\\ \\
\textbf{Step~1a: } \emph{There exists a point $x \in W$ such that $\phi(x) \notin W$.}
\\ \\
We argue by contradiction and assume that $\phi(W) \subset W$.
In particular, the restriction of~$\phi$ to~$W$
gives a diffeomorphism between the two manifolds with boundary~$W$ and $\phi(W) \subset W$.
Therefore, if $y \in W$ satisfies $\phi(y) \in \partial W$, it follows that $y \in \partial W$.
We conclude
$$\phi(W) \cap \partial W \,\subset\, \phi(\partial W).$$

Since $\phi$ displaces the boundary from itself, we obtain
$$\phi(W) \cap \partial W = \emptyset .$$
Denoting by $\mathrm{int}$ the interior of a set, we can write this equivalently as
$$\phi(W) \,\subset\, \mathrm{int}(W).$$
Hence $\phi(W)$ is a strict subset of~$W$.
Since $W$ is compact, its volume is finite.
Therefore, the volume of~$\phi(W)$ is strictly less than the volume of~$W$.
This contradicts the fact that $\phi$ is volume preserving.
Therefore the assertion of Step~1a has to hold true.
\\ \\
\textbf{Step~1b: } $\phi(\partial W) \subset W^c$.
\\ \\
Since $\phi$ displaces $\partial W$ from itself, we have
$\partial W \subset \mathrm{int}(W) \cup W^c$. Since $\partial W$ is connected by assumption,
we either have $\phi(\partial W) \subset \mathrm{int}(W)$ or $\phi(\partial W) \subset W^c$.
Therefore it suffices to show that $\phi(\partial W) \cap W^c$ is not empty.
Since $W^c$ has infinite volume but $\phi(W)$ has finite volume by assumption,
we conclude that there exists a point $y_0 \in W^c$
such that $y_0 \notin \phi(W)$. Step~1a implies the existence of a
point $y_1 \in W^c$ satisfying $y_1 \in \phi(W)$.
Since $V,W$, and $\partial W$ are connected by assumption, we obtain from the
Mayer--Vietoris long exact sequence that $W^c$ is connected as well.
Therefore there exists a path $y \in C^0([0,1],W^c)$ satisfying $y(0)=y_0$ and $y(1)=y_1$.
Since $W^c$ is Hausdorff, there exists $t \in (0,1)$ such that
$y(t) \in \partial (\phi(W)) = \phi(\partial W)$.
Therefore $\phi(\partial W) \cap W^c$ is not empty, which finishes the proof of Step~1b.
\\ \\
\textbf{Step~1c: } \emph{We prove Step~1.}
\\ \\
We assume by contradiction that there exists a point $x_0 \in W \cap \phi(W)$.
By Step~1a and the fact that $\phi$ is volume preserving,
we conclude that $W$ cannot be a subset of~$\phi(W)$.
Therefore there has to exist a point $x_1 \in W \cap (\phi(W))^c$ as well.
Since $W$ is connected by assumption, there exists a path $x \in C^0([0,1],W)$
satisfying $x(0)=x_0$ and $x(1)=x_1$. As in Step~1b there has to exist $t \in (0,1)$
such that $x(t) \in \phi(\partial W)$. But this contradicts the assertion of Step~1b.
The proof of Step~1 is complete.
\\ \\
\textbf{Step~2: } \emph{If a diffeomorphism $\phi$ isotopic to the identity satisfies
$\phi(W) \cap W =\emptyset$, then the projection homomorphism
$p_* \colon H_*(W;\mathbb{Q}) \to H_*(W,\partial W; \mathbb{Q})$ vanishes.}
\\ \\
We prove the dual version in de~Rham cohomology, i.e.~we show that
the inclusion homomorphism from the compactly supported de~Rham cohomology of~$W$
to the de~Rham cohomology of~$W$ vanishes.
To see this, pick $\omega \in \Omega^k(W)$ which is compactly supported and closed.
We show that there exists $\eta \in \Omega_{k-1}(W)$ not necessarily compactly supported
such that $\omega=d\eta$.
Since $\omega$ is compactly supported, we can extend it trivially to a
closed $k$-form on~$V$, which we refer to as~$\widetilde{\omega}$.
Since $\phi$ is isotopic to the identity, we have $\phi=\phi^1$ for a
flow $\{\phi^t\}_{t \in [0,1]}$ generated by a time dependent vector field~$X_t$.
By the Cartan formula and the fact that $\widetilde{\omega}$ is closed we obtain
\begin{eqnarray*}
\frac{d}{dt}\big(\phi^t\big)^*\widetilde{\omega} &=&
L_{X_t}\big(\phi^t\big)^*\widetilde{\omega}\\
&=&\big(d i_{X_t}+i_{X_t}
d\big)\big(\phi^t\big)^*\widetilde{\omega}\\
&=&d i_{X_t}\big(\phi^t\big)^*\widetilde{\omega}.
\end{eqnarray*}
We define a $(k-1)$-form on $V$ by the formula
$$\widetilde{\eta} \,=\, -\int_0^1
i_{X_t}\big(\phi^t\big)^*\widetilde{\omega} .
$$
By the previous computation we get
$$\widetilde{\omega}-\phi^*\widetilde{\omega} \,=\, d\widetilde{\eta}.$$
Now set
$$\eta \,=\, \widetilde{\eta}|_{W} \,\in\, \Omega^{k-1}(W).$$
Since $\phi$ displaces $W$ we obtain
$$\omega \,=\, d\eta.$$
This finishes the proof of Step~2 and hence of Lemma~\ref{disp}.
\hfill $\square$

\subsection{Rational homology spheres and completion of the proof of Theorem~A~(ii)}
\label{ss:rat}

In the case of rational homology spheres, the homology of the filling is completely determined:
\begin{lemma}\label{homi}
Suppose $(\Sigma,\xi)$
is a $(2n-1)$-dimensional simply-connected rational homology sphere
admitting a displaceable exact contact embedding into $(V,d\lambda)$.
Let $W$ denote the compact component of $V \setminus \Sigma$. Then
$$
H_*(W,\Sigma;\mathbb{Q}) \,=\,
\left\{\begin{array}{cl}
\mathbb{Q} & \mbox{if }\, *=2n, \\
\{0\} & \textrm{else.}
\end{array}\right.
$$
\end{lemma}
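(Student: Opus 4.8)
The plan is to reduce the whole computation to the long exact sequence of the pair $(W,\Sigma)$, using Lemma~\ref{disp} to kill the projection map. First I would verify that Lemma~\ref{disp} applies. The displacing Hamiltonian isotopy of $V$ preserves the symplectic volume form $\omega^n=(d\lambda)^n$, and since $\Sigma$ is compact we may arrange it to be compactly supported; thus $\Sigma=\partial W$ is volume preserving displaceable. The remaining hypotheses hold as well: $W$ is compact and connected with $\partial W=\Sigma$; the boundary $\Sigma$ is connected, being a rational homology sphere; and the complement of $W$ in the convex manifold $V$ has infinite volume, because on the convex end the volume form integrates to infinity. Lemma~\ref{disp} then gives that the projection homomorphism $p_*\colon H_*(W;\Q)\to H_*(W,\Sigma;\Q)$ vanishes.

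Next I would insert $p_*=0$ into the long exact sequence of the pair,
$$
\cdots \longrightarrow H_k(\Sigma;\Q)\xrightarrow{\ i_*\ } H_k(W;\Q)\xrightarrow{\ p_*\ } H_k(W,\Sigma;\Q)\xrightarrow{\ \partial\ } H_{k-1}(\Sigma;\Q)\xrightarrow{\ i_*\ } H_{k-1}(W;\Q)\longrightarrow\cdots
$$
The vanishing of $p_*$ says precisely that $\operatorname{im}(p_*)=0=\ker(\partial)$, so the connecting homomorphism $\partial$ is injective in every degree. In particular $\dim H_k(W,\Sigma;\Q)\le\dim H_{k-1}(\Sigma;\Q)$ for all $k$. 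Since $\Sigma$ is a simply-connected rational homology sphere, $H_j(\Sigma;\Q)\cong\Q$ for $j\in\{0,2n-1\}$ and vanishes otherwise.

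The computation is then immediate, degree by degree. For $k\notin\{1,2n\}$ the target $H_{k-1}(\Sigma;\Q)$ vanishes, so injectivity of $\partial$ forces $H_k(W,\Sigma;\Q)=0$. For $k=1$, the map $\partial$ takes values in $H_0(\Sigma;\Q)$, but the following map $i_*\colon H_0(\Sigma;\Q)\to H_0(W;\Q)$ is an isomorphism because both $\Sigma$ and $W$ are connected; exactness gives $\operatorname{im}(\partial)=\ker(i_*)=0$, and combined with injectivity of $\partial$ this yields $H_1(W,\Sigma;\Q)=0$. Finally, for $k=2n$, I would invoke that $W$ is a compact connected oriented $2n$-manifold with boundary $\Sigma$ --- it is oriented by $\omega^n$ --- so that the relative fundamental class gives $H_{2n}(W,\Sigma;\Q)\cong\Q$. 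Collecting these cases gives exactly the stated answer.

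The only genuinely delicate point is the first paragraph: confirming that the Hamiltonian displaceability hypothesis can be upgraded to the compactly supported, volume preserving displaceability required by Lemma~\ref{disp}, and checking the connectedness and infinite-volume conditions on $V$, $W$, and $\Sigma$. Once $p_*=0$ is established, the remainder is a routine diagram chase that uses nothing beyond the rational homology of a sphere and the existence of a fundamental class for an oriented manifold with boundary.
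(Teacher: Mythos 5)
Your argument follows the same route as the paper: use Lemma~\ref{disp} to kill $p_*$, so that the long exact sequence of the pair $(W,\Sigma)$ splits into short exact sequences $0 \to H_k(W,\Sigma;\Q) \to H_{k-1}(\Sigma;\Q) \to H_{k-1}(W;\Q) \to 0$, read off the vanishing in degrees $\neq 2n$ from the rational homology of the sphere together with the connectedness of $W$ and $\Sigma$, and get the top degree from the fundamental class (the paper phrases this as Poincar\'e--Lefschetz duality $H^{2n}(W,\Sigma;\Q)\cong H_0(W;\Q)$, which is the same thing). The degree-by-degree bookkeeping is fine.

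The one step that does not hold as you stated it is the claim that the complement of $W$ in $V$ has infinite volume ``because on the convex end the volume form integrates to infinity.'' The paper's notion of convexity only requires an exhaustion $V=\bigcup_k V_k$ by compacta with contact-type boundaries; it does not require completeness of the Liouville flow, so $V$ may well have finite total volume (e.g.\ a Liouville domain with only a partial, bounded symplectization collar attached still admits such an exhaustion). Since Lemma~\ref{disp} genuinely needs the complement of $W$ to have infinite volume, this hypothesis must be arranged rather than asserted. The paper does this in the first paragraph of its proof: if $V$ has finite volume, choose $V_k$ in the exhaustion containing $W$ (and one should also take it to contain the support of the displacing isotopy), and replace $V$ by the completion $\widehat V$ obtained by attaching genuine cylindrical ends to $\partial V_k$; the new target is again exact, convex, with $c_1$ vanishing on $\pi_2$, and now has infinite volume. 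With that reduction inserted, your verification of the hypotheses of Lemma~\ref{disp} and the rest of the argument go through.
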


\begin{proof}
We can assume that $V$ has infinite volume.
Indeed, if $V$ has finite volume, we choose a compact convex manifold $V_k$
in the exhaustion of~$V$ such that $W \subset V_k$, and replace $V$
by the manifold $\widehat V$ obtained by attaching cylindrical ends
to the boundary of~$V_k$.
Notice that $\widehat V$ is also an exact convex manifold whose first Chern class
vanishes on $\pi_2(\widehat V)$.

In view of Lemma~\ref{disp} the long exact homology sequence for the pair $(W,\Sigma)$
splits for every $k \in \mathbb{Z}$ into short exact sequences
$$
0 \longrightarrow H_k(W,\Sigma;\mathbb{Q})
\stackrel{\partial}\longrightarrow H_{k-1}(\Sigma;\mathbb{Q})
\stackrel{i_*} \longrightarrow  H_{k-1}(W;\mathbb{Q})
\longrightarrow 0 .
$$
By using the fact that $\Sigma$ is a rational homology sphere as well as
$H_0(W;\mathbb{Q}) = \mathbb{Q}$ we conclude that
$H_*(W,\Sigma;\mathbb{Q})=\{0\}$ for $* \neq 2n$.
Since $H^{2n}(W,\Sigma;\mathbb{Q})$ is Poincar\'e dual to
$H_0(W;\mathbb{Q})$, the result for $*=2n$ also follows.
\end{proof}

Lemma~\ref{homi} shows that the Euler characteristic of the relative homology
is given by
$$
\chi(W,\Sigma)=1.
$$
Assertion~(ii) of Theorem~A follows from this and Corollary~\ref{cor:mean_euler}.

\section[Brieskorn manifolds]{Brieskorn manifolds} \label{s:Brieskorn}
Choose positive integers $a_0,\ldots,a_n$.
The Brieskorn variety $V(a_0,\ldots,a_n)$ is defined as the following subvariety of $\C^{n+1}$,
$$
V_\epsilon(a_0,\ldots,a_n) \,=\,
\biggl\{
(z_0,\ldots,z_n)\in \C^{n+1}~|~\sum_{i=0}^n z_i^{a_i}=\epsilon
\biggr\} .
$$
For $\epsilon=0$, this variety is singular unless one of the exponents~$a_i$ is equal to~$1$.
For $\epsilon \neq 0$,
we have a complex submanifold of~$\C^{n+1}$.

Given a Brieskorn variety $V_0(a_0,\ldots,a_n)$ we define the Brieskorn manifold as
$$
\Sigma(a_0,\ldots,a_n)
\,:=\, V_0(a_0,\ldots,a_n) \cap S^{2n+1}_R,
$$
where $S^{2n+1}_R$ is the sphere of radius $R>0$ in~$\C^{n+1}$.
For the diffeomorphism type, the precise value of~$R$ does not matter.
Brieskorn manifolds carry a natural contact structure, which comes from the following construction.
\begin{lemma}
Let $(W,i)$ be a complex variety together with a function $f$ that is plurisubharmonic away from singular points.
Then regular level sets $M=f^{-1}(c)$ carry a contact structure
$\xi = TM \cap i \,TM = \ker (-df \circ i)|_M$.
\end{lemma}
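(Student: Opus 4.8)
The plan is to realise the $1$-form $\lambda := -df\circ i$ as the restriction to $M$ of a Liouville form for the Kähler form associated with $f$, and then to invoke the standard principle that a Liouville vector field transverse to a hypersurface restricts to a contact form there. Throughout I work on the regular locus of $W$ and take $c$ to be a regular value, so that $M=f^{-1}(c)$ is a smooth hypersurface avoiding the singular set; write $n$ for the complex dimension, so $\dim_{\R} M = 2n-1$. Set $\omega := d\lambda = -d(df\circ i)$. The hypothesis that $f$ is (strictly) plurisubharmonic is precisely the assertion that $\omega$ is a Kähler form for $i$, that is, that $g(\cdot,\cdot) := \omega(\cdot, i\,\cdot)$ is a Riemannian metric. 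This positivity of the Levi form on complex tangencies is the one genuinely analytic input; everything else in the argument is formal.

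First I would dispose of the two asserted equalities, which are pointwise linear algebra at each $p\in M$. By definition $\lambda_p(X) = -df_p(iX)$, so $\ker\lambda_p = \ker(df_p\circ i)$, and since $T_pM = \ker df_p$ one gets
$$
\ker(\lambda|_M)_p \,=\, T_pM\cap\ker\lambda_p \,=\, \{X : df_p(X)=0,\ df_p(iX)=0\}.
$$
On the other hand $X\in i\,T_pM$ means $-iX\in T_pM$, i.e.\ $df_p(iX)=0$, so this subspace equals $T_pM\cap i\,T_pM$; this yields $\xi = TM\cap iTM = \ker(-df\circ i)|_M$. Writing $\nabla f$ for the $g$-gradient of $f$, I would also record that $df_p$ and $df_p\circ i$ are linearly independent at a regular point, since $\lambda_p(\nabla f)=\omega(\nabla f,\nabla f)=0$ while $df_p(\nabla f)=|\nabla f|_g^2\neq 0$. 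Hence $\xi_p$ is cut out by two independent conditions, so it has codimension $2$ in $T_pW$ and codimension $1$ in $T_pM$; in particular $\lambda|_M$ is nowhere zero with kernel exactly $\xi$.

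The main step is to recognise $\nabla f$ as a transverse Liouville field. Using the compatibility $\omega(\cdot,i\,\cdot)=g$ one computes $\lambda = \iota_{\nabla f}\omega$, whence $\mathcal{L}_{\nabla f}\omega = d\,\iota_{\nabla f}\omega = d\lambda = \omega$, so $\nabla f$ is Liouville; and $df(\nabla f)=|\nabla f|_g^2>0$ on $M$ shows transversality to the regular level set. To conclude the contact condition $\lambda\wedge(d\lambda)^{n-1}|_M\neq 0$ directly, I would note that the characteristic line of $\omega|_{T_pM}$ is spanned by $i\nabla f$: this vector lies in $T_pM$ because $df(i\nabla f)=\omega(\nabla f,\nabla f)=0$, and it satisfies $\omega(i\nabla f,Y)=-g(Y,\nabla f)=-df(Y)=0$ for all $Y\in T_pM$. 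Since $\xi_p$ is $i$-invariant, $\omega|_{\xi_p}$ is nondegenerate because $\omega(X,iX)=g(X,X)>0$ for $0\neq X\in\xi_p$, so $(\omega|_{\xi_p})^{\,n-1}\neq 0$; wedging with $\lambda$, which is nonzero on the complementary direction $i\nabla f$ (indeed $\lambda(i\nabla f)=\omega(\nabla f,i\nabla f)=|\nabla f|_g^2>0$) and vanishes on $\xi_p$, produces a nonvanishing top form on $T_pM$. The only point requiring care is the sign convention tying strict plurisubharmonicity to positivity of $g=\omega(\cdot,i\,\cdot)$; once that is fixed, the remaining obstacle is merely bookkeeping, and the argument also exhibits $i\nabla f$ as the Reeb direction.
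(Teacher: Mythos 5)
Your argument is correct, and it is the standard one: the paper itself gives no proof of this lemma, treating it as a known fact (cf.\ Geiges's book), and your route via the Liouville field $\nabla f$ for $\omega=-d(df\circ i)$ together with the splitting $T_pM=\xi_p\oplus\R\,\langle i\nabla f\rangle$ is exactly the textbook verification. The one point worth flagging, which you do note parenthetically, is that the lemma as stated says ``plurisubharmonic'' while the argument genuinely needs \emph{strict} plurisubharmonicity so that $g=\omega(\cdot,i\,\cdot)$ is positive definite; this is satisfied by the function $f=\sum_j\frac{a_j}{8}|z_j|^2$ to which the paper applies the lemma.
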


Applying this lemma with the plurisubharmonic function $f=\sum_j\frac{a_j}{8}|z_j|^2$
we obtain the particularly nice contact form
$$
\alpha \,=\, \frac{i}{8} \sum_j a_j \left(z_j d\bar z_j-\bar z_j dz_j \right)
$$
for this natural contact structure.
Its Reeb vector field at radius~$R=1$ is given by
$$
R_\alpha \,=\, 4i \sum_j \frac{1}{a_j} (z_j \partial_{z_j}- \bar z_j \partial_{\bar z_j}) .
$$
The Reeb flow therefore is
$$
\phi^{R_\alpha}_t (z_0,\ldots,z_n) \,=\, \left( e^{4it/a_0}z_0,\ldots,e^{4it/a_n}z_n \right).
$$
We thus see that all Reeb orbits are periodic.
This allows us to interpret Brieskorn manifolds as Boothby--Wang bundles over symplectic orbifolds.

\begin{proposition}
\label{prop:homology_sphere}
Brieskorn manifolds admit a Stein filling, and their contactomorphism type does not depend on the radius~$R$ of the sphere used to define them.
\end{proposition}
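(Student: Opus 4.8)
The plan is to realise the filling as a Milnor fibre and to deduce independence of the radius from the observation that all sphere sections of $V_0$ are regular level sets of one and the same strictly plurisubharmonic function. For the filling: when $\epsilon\neq 0$ the variety $V_\epsilon$ is a smooth closed complex submanifold of $\C^{n+1}$, hence Stein, and the restriction of the strictly plurisubharmonic exhaustion $\psi=|z|^2$ of $\C^{n+1}$ to $V_\epsilon$ is again strictly plurisubharmonic and exhausting. Thus for any $R$ the sublevel set
$$
W_\epsilon \,=\, V_\epsilon\cap\{\psi\le R^2\}
$$
is a compact Stein domain, and it remains to identify its boundary $V_\epsilon\cap S^{2n+1}_R$ with the link $\Sigma=V_0\cap S^{2n+1}_R$ as a contact manifold. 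By the Milnor fibration theorem this boundary is diffeomorphic to $\Sigma$ once $|\epsilon|$ is small relative to $R$; and since the family $V_\epsilon\cap S^{2n+1}_R$, $|\epsilon|\le\epsilon_0$, consists of strictly pseudoconvex hypersurfaces disjoint from the origin (the only singular point of $V_0$), a Gray--Moser stability argument promotes this diffeomorphism to a contactomorphism, so that the Milnor fibre $W_\epsilon$ is the desired Stein filling.

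For independence of the radius I would argue that the round sphere sections $\Sigma_R=V_0\cap S^{2n+1}_R$ are exactly the regular level sets of $\psi=|z|^2$ restricted to $V_0^{\reg}=V_0\setminus\{0\}$. Their induced contact structure is the complex-tangency distribution $\xi=TM\cap i\,TM$, which depends only on the hypersurface and the ambient complex structure and not on the defining function, so it coincides with the $\xi$ of the lemma above. The point is that $\psi|_{V_0^{\reg}}$ has no critical point on $(0,\infty)$: the weighted Euler vector field generating $z_j\mapsto t^{1/a_j}z_j$ is tangent to $V_0$ and strictly increases $|z|^2$, so every round sphere meets $V_0$ transversally (this is Milnor's transversality lemma for the weighted-homogeneous cone $V_0$). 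Hence any two sections $\Sigma_R$ and $\Sigma_{R'}$ are regular level sets of a strictly plurisubharmonic function with no intervening critical value, the part of $V_0^{\reg}$ between them is a trivial Liouville cobordism, and its Liouville flow is a contactomorphism $\Sigma_R\to\Sigma_{R'}$. As $(0,\infty)$ is connected, all sphere sections are mutually contactomorphic.

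The routine parts are the strict plurisubharmonicity of the restrictions of $\psi$ and the conformal behaviour of $\alpha$ under a Liouville flow. I expect the two genuine inputs to be the main obstacle: first, the absence of critical points of $\psi|_{V_0^{\reg}}$, i.e.\ transversality of every sphere to $V_0$, which I would derive from the weighted-cone structure of $V_0$ via the weighted Euler field and Milnor's transversality lemma; and second, upgrading Milnor's diffeomorphism between $\Sigma$ and the boundary of the Milnor fibre to a contactomorphism, where the decisive point is that $V_\epsilon\cap S^{2n+1}_R$ is a smooth family of strictly pseudoconvex hypersurfaces staying away from the singularity, so that Gray stability applies. Once these two transversality and stability facts are in place, the remaining steps are formal.
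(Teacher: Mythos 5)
Your argument is correct and follows essentially the same route as the paper's (very terse) proof: smooth the singular filling by passing to $V_\epsilon$ with $\epsilon\neq 0$, identify its boundary with the link via Gray stability, and use the weighted-homogeneity of $V_0$ to see that all spheres meet $V_0^{\reg}$ transversally. The only cosmetic difference is that for radius independence you run the Liouville (gradient) flow of $\psi|_{V_0^{\reg}}$ between level sets, whereas the paper invokes Gray stability again; both rest on the same transversality fact, so no further comment is needed.
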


Indeed, by definition, Brieskorn manifolds are singularly fillable.
One can smoothen this filling by taking $\epsilon \neq 0$, and consider $V_\epsilon$ rather than~$V_0$.
The resulting contact structure is contactomorphic by Gray stability.
Furthermore, $V_\epsilon$ gives then the Stein filling.
Gray stability can also be used to show independence of the radius~$R$,
see also Theorem~7.1.2 from~\cite{G}.

\subsection{Brieskorn manifolds and homology spheres}
Let us start by citing some theorems from~\cite{HM}.
This book gives precise conditions for Brieskorn manifolds to be
integral homology spheres.
However, we shall restrict ourselves to the following case.

\begin{proposition} \label{s5:prop1}
If $a_0,\ldots,a_n$ are pairwise relatively prime, then $\Sigma(a_0,\ldots,a_n)$ is an integral homology sphere.
\end{proposition}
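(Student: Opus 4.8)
The plan is to reduce the computation of $H_*(\Sigma(a_0,\ldots,a_n);\Z)$ to the monodromy of the Milnor fibration of the isolated singularity of $f(z)=\sum_{i=0}^n z_i^{a_i}$ at the origin, exactly as carried out for Brieskorn manifolds in~\cite{HM}. First I would recall Milnor's theory of isolated hypersurface singularities: the link $\Sigma=f^{-1}(0)\cap S^{2n+1}$ is $(n-2)$-connected, and the Milnor fibre $F$ is homotopy equivalent to a wedge of $\mu=\prod_{i=0}^n(a_i-1)$ copies of $S^n$, so that $\widetilde H_*(\Sigma;\Z)$ is concentrated in degrees $n-1$ and $n$. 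Writing $h_*\colon H_n(F;\Z)\to H_n(F;\Z)$ for the monodromy operator on $H_n(F;\Z)\cong\Z^\mu$, the Wang sequence of the fibration $S^{2n+1}\setminus\Sigma\to S^1$ identifies $H_n(\Sigma)\cong\ker(\id-h_*)$ and $H_{n-1}(\Sigma)\cong\coker(\id-h_*)$. Hence $\Sigma$ is an integral homology sphere precisely when $\id-h_*$ is invertible over~$\Z$, i.e. when $\det(\id-h_*)=\pm1$; this is Milnor's criterion.

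Next I would import the Brieskorn--Pham description of the monodromy: the eigenvalues of $h_*$ are exactly the products $\prod_{i=0}^n\zeta_i$, where each $\zeta_i$ runs over the $a_i$-th roots of unity different from~$1$. If some $a_i=1$ the Milnor fibre is contractible, $\mu=0$, and $\Sigma\cong S^{2n-1}$, so the statement is trivial; I therefore assume all $a_i\ge2$. The key elementary observation is then about the \emph{order} of such an eigenvalue $\lambda=\prod_i\zeta_{a_i}^{j_i}$ with $1\le j_i\le a_i-1$: since the $a_i$ are pairwise coprime, the orders $a_i/\gcd(a_i,j_i)$ of the factors are pairwise coprime, so $\lambda$ has order $\prod_{i=0}^n a_i/\gcd(a_i,j_i)$. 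Each factor satisfies $a_i/\gcd(a_i,j_i)\ge2$ (as $1\le j_i\le a_i-1$), and there are at least two of them, so this order is a product of at least two pairwise coprime integers $\ge2$. Consequently the order of every eigenvalue is neither $1$ nor a prime power.

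From this the two vanishing statements follow at once. No eigenvalue equals~$1$, since its order is $>1$, so $\ker(\id-h_*)=0$ and $H_n(\Sigma)=0$; in particular $\Sigma$ is already a rational homology sphere. For the integral statement I would factor the characteristic polynomial $\Delta(t)=\det(t\,\id-h_*)\in\Z[t]$, whose roots are all roots of unity, into cyclotomic polynomials, $\Delta=\prod_{m}\Phi_m^{e_m}$ with $e_m\ge0$. Since $\Phi_m(1)=p$ when $m$ is a power of a prime~$p$ and $\Phi_m(1)=1$ for all other $m>1$, and since no eigenvalue has prime-power order, every factor $\Phi_{p^k}$ is absent; therefore $\det(\id-h_*)=\Delta(1)=\prod_m\Phi_m(1)^{e_m}=1$. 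Thus $\coker(\id-h_*)=0$, $H_{n-1}(\Sigma)=0$, and $\Sigma$ is an integral homology sphere.

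The number-theoretic core above is genuinely elementary; the real content, and the main obstacle, is the topological input that I would cite rather than reprove: the Milnor fibration theorem, the identification of link homology with $\ker$ and $\coker$ of $\id-h_*$, and the Brieskorn--Pham eigenvalue formula for the monodromy (all available in~\cite{HM}). Two points deserve care when writing the details: the degenerate case $a_i=1$, and the precise sign in $\det(h_*-\id)=(-1)^{\mu}\det(\id-h_*)$, neither of which affects the conclusion $|\det(\id-h_*)|=1$.
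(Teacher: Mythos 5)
Your argument is correct, and it is essentially the argument the paper delegates to the literature: Proposition~\ref{s5:prop1} is stated in the paper without proof, as a citation of Hirzebruch--Mayer~\cite{HM}, and what you have written is a faithful reconstruction of that classical proof (Milnor fibration and Wang sequence reducing the question to $\det(\id-h_*)=\pm1$, the Brieskorn--Pham eigenvalue description of the monodromy, and the evaluation $\Phi_m(1)=1$ for $m$ with at least two prime factors, using pairwise coprimality to rule out eigenvalues of order $1$ or prime-power order). The two caveats you flag, the degenerate case $a_i=1$ and the overall sign, are indeed the only delicate points and neither affects the conclusion.
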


Furthermore, higher dimensional Brieskorn manifolds, i.e.~$\dim \Sigma>3$, are always simply-connected,
so we in fact find
\begin{theorem} \label{s5:t}
If $a_0,\ldots,a_n$ are pairwise relatively prime, and if $n>2$,
then $\Sigma(a_0,\ldots,a_n)$ is homeomorphic to~$S^{2n-1}$.
\end{theorem}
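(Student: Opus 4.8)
The plan is to upgrade the two properties already in hand---that $\Sigma(a_0,\ldots,a_n)$ is an integral homology sphere and that it is simply-connected---to the statement that it is a homotopy sphere, and then to appeal to the generalized Poincar\'e conjecture. First I would record the dimension bookkeeping: since $\dim \Sigma(a_0,\ldots,a_n) = 2n-1$ and $n>2$, we have $2n-1 \geq 5$, and in particular $2n-1 > 3$. The latter inequality places us in the range where the cited simple-connectivity statement for higher-dimensional Brieskorn manifolds applies, so $\Sigma$ is simply-connected, while Proposition~\ref{s5:prop1} gives that $\Sigma$ has the integral homology of $S^{2n-1}$.

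Next I would show that $\Sigma$ is a homotopy sphere. Being simply-connected, it satisfies the hypotheses of the Hurewicz theorem; applying Hurewicz inductively together with $\widetilde H_i(\Sigma;\Z)=0$ for $i<2n-1$ yields $\pi_i(\Sigma)=0$ for $1 \leq i \leq 2n-2$ and $\pi_{2n-1}(\Sigma) \cong H_{2n-1}(\Sigma;\Z) \cong \Z$. A generator is realized by a map $f \colon S^{2n-1} \to \Sigma$ inducing an isomorphism on top homology; since both spaces are simply-connected with the homology of a sphere, $f$ is a homology isomorphism and hence, by Whitehead's theorem, a homotopy equivalence. Thus $\Sigma$ is a homotopy $(2n-1)$-sphere.

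Finally I would invoke the topological generalized Poincar\'e conjecture in dimension $2n-1 \geq 5$, which follows from Smale's $h$-cobordism theorem: a closed homotopy sphere of dimension at least five is homeomorphic to the standard sphere. This yields that $\Sigma(a_0,\ldots,a_n)$ is homeomorphic to $S^{2n-1}$. There is no genuine obstacle here beyond citing the generalized Poincar\'e conjecture; the substantive input has already been imported from \cite{HM}, and the only point to be careful about is the dimension count ensuring $2n-1 \geq 5$, so that the $h$-cobordism theorem is actually available.
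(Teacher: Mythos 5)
Your proposal is correct and follows essentially the same route as the paper, which simply combines Proposition~\ref{s5:prop1} with the simple-connectivity of higher-dimensional Brieskorn manifolds and implicitly invokes the generalized Poincar\'e conjecture in dimension $2n-1\geq 5$; you have merely filled in the standard Hurewicz--Whitehead and $h$-cobordism details that the paper leaves to the reader (or to \cite{HM}).
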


\begin{remark}
\label{rem:exponent=1}
If one of the exponents $a_j$ is equal to~$1$, then the resulting Brieskorn manifold $(\Sigma(a_0,\ldots,a_n),\alpha)$ is contactomorphic to the standard sphere $(S^{2n-1},\alpha_0)$.
Indeed, in this case the Brieskorn variety~$V_\epsilon(a_0,\ldots,a_n)$
is biholomorphic to~$\C^{n}$,
as we can regard the variety as a graph.
\end{remark}

\subsection{Formula for the mean Euler characteristic for Brieskorn manifolds}
We can think of Brieskorn manifolds as Boothby--Wang orbibundles over symplectic orbifolds.
However, all the essential data is contained in the $S^1$-equivariant homology groups associated with the Reeb action.
The following lemma will hence be useful.

\begin{lemma}
\label{lemma:S^1-euler_characteristic}
Let $N$ be
a rational homology sphere of dimension $2n+1$ with a fixed-point free $S^1$-action $N \times S^1 \to N$.
Then
$$
H_*^{S^1}(N;\Q) \,\cong\, H_*(\C \P^n;\Q).
$$
In particular,
$$
\chi^{S^1}(N) \,=\, n+1.
$$
\end{lemma}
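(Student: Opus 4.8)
The plan is to work throughout with rational coefficients, so that homology and cohomology are dual and it suffices to compute $H^*_{S^1}(N;\Q)$; the stated homological isomorphism and the value $\chi^{S^1}(N)=n+1$ then follow by duality over the field~$\Q$. The first step is to replace the Borel construction by the orbit space. Since the $S^1$-action is fixed-point free, every isotropy group is finite, so the orbit space $B:=N/S^1$ is a compact orbifold of dimension~$2n$, and the natural projection $\pi\colon N\times_{S^1}ES^1\to B$ has fibres $B\Z_k$ (classifying spaces of the finite isotropy groups), which are rationally acyclic. By the Vietoris--Begle (Leray) theorem this forces $\pi^*$ to be an isomorphism, so that $H^*_{S^1}(N;\Q)\cong H^*(B;\Q)$. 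The fixed-point-free hypothesis is used here in an essential way: it is exactly what guarantees finite isotropy and, crucially, that $\dim B=2n$, hence $H^k(B;\Q)=0$ for $k>2n$.

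Next I would exploit the Seifert-type $S^1$-orbibundle $S^1\to N\to B$, with rational Euler class $e\in H^2(B;\Q)$, through its Gysin sequence
$$
\cdots \to H^{k-2}(B;\Q) \stackrel{\cup e}{\longrightarrow} H^k(B;\Q) \stackrel{\pi^*}{\longrightarrow} H^k(N;\Q) \longrightarrow H^{k-1}(B;\Q) \stackrel{\cup e}{\longrightarrow} H^{k+1}(B;\Q) \to \cdots
$$
Feeding in the hypothesis that $N$ is a rational homology sphere, so that $H^*(N;\Q)=H^*(S^{2n+1};\Q)$ is $\Q$ in degrees $0$ and $2n+1$ and vanishes otherwise, the vanishing of $H^k(N;\Q)$ in the range $1\le k\le 2n$ makes the maps $\cup e\colon H^{j}(B;\Q)\to H^{j+2}(B;\Q)$ surjective and injective, hence isomorphisms, for $0\le j\le 2n-2$. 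Starting from $H^0(B;\Q)=\Q$ (as $B$ is connected) and $H^1(B;\Q)=0$, this propagates to $H^{2j}(B;\Q)\cong\Q$ for $0\le j\le n$ and $H^{\mathrm{odd}}(B;\Q)=0$ up to degree $2n-1$. The dimension bound $\dim B=2n$ then kills all degrees above~$2n$, and one checks consistency at the top: the segment $0=H^{2n-1}(B)\to H^{2n+1}(B)=0\to H^{2n+1}(N)=\Q\to H^{2n}(B)\to H^{2n+2}(B)=0$ is automatically exact and merely confirms $H^{2n}(B;\Q)\cong\Q$.

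Collecting the outcome, $H^*(B;\Q)$ is $\Q$ in each even degree $0,2,\ldots,2n$ and zero otherwise, i.e. $H^*(B;\Q)\cong H^*(\C\P^n;\Q)$. Transporting this back through $H^*_{S^1}(N;\Q)\cong H^*(B;\Q)$ and dualizing gives $H_*^{S^1}(N;\Q)\cong H_*(\C\P^n;\Q)$, and since all classes sit in even degree we obtain $\chi^{S^1}(N)=n+1$. I expect the main obstacle to be the finiteness/truncation step rather than the Gysin bookkeeping: one must be sure that the pattern does not continue as an infinite $\C\P^\infty$-tower, which is precisely where fixed-point-freeness enters, via the dimension bound $\dim(N/S^1)=2n$ (equivalently, via the localization theorem, since $N^{S^1}=\emptyset$ makes $H^*_{S^1}(N;\Q)$ a torsion, hence finite-dimensional, $H^*(BS^1;\Q)$-module). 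I would also take a little care in justifying the rational identification of the Borel construction with the orbit-space cohomology and in stating the orbifold Gysin sequence, but both are standard once the isotropy is known to be finite.
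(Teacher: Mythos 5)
Your argument is correct, and its Gysin-sequence core is the same as the paper's: both use the vanishing of $H_*(N;\Q)$ in degrees $1,\dots,2n$ to make multiplication by the Euler class an isomorphism through a range, producing the $\C\P^n$ pattern, and both then need a separate finiteness argument to cut off the would-be $\C\P^\infty$ tower. The genuine differences are in the two supporting steps. You descend to the orbit space $B=N/S^1$ via the rational equivalence $N\times_{S^1}ES^1\to B$ (legitimate, since fixed-point-freeness forces finite isotropy and hence $\Q$-acyclic fibres $B\Z_k$) and run the Gysin sequence of the Seifert orbibundle over $B$; the paper instead stays with the honest free bundle $N\times ES^1\to N\times_{S^1}ES^1$ and never needs orbifold cohomology or the Vietoris--Begle/Leray identification. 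More substantially, you obtain finiteness from $\dim B=2n$ (or from localization, since $N^{S^1}=\emptyset$ makes $H^*_{S^1}(N;\Q)$ a torsion $\Q[u]$-module), whereas the paper proves boundedness of $H^{S^1}_*(N;\Q)$ by choosing an $S^1$-invariant Morse--Bott function \`a la Wasserman, observing that the Borel quotients of its finitely many critical orbits are infinite lens spaces and hence rationally points, and invoking the Morse--Bott spectral sequence. Your route buys a shorter, more conceptual truncation at the price of the standard-but-nontrivial identification $H^*_{S^1}(N;\Q)\cong H^*(B;\Q)$ and the orbifold Gysin sequence, which you rightly flag; the paper's route is longer but stays entirely inside the Borel construction and reuses the Morse--Bott machinery already deployed for symplectic homology. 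One small point: your input $H^1(B;\Q)=0$ is itself best extracted from the Gysin segment $0=H^{-1}(B;\Q)\to H^1(B;\Q)\to H^1(N;\Q)=0$, which requires $n\ge 1$; this is harmless for the intended applications.
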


\begin{proof}
Note that $N \times ES^1$ carries a free $S^1$-action,
so we can think of $N \times ES^1$ as an $S^1$-bundle over $N \times_{S^1} ES^1$.
We consider the Gysin sequence for this space with $\Q$-coefficients.
Since $N$ is
a rational homology sphere of dimension $2n+1$ and $ES^1$ is contractible,
all homology groups of $N \times ES^1$ except in dimension~$0$ and $2n+1$ vanish.
Hence the Gysin sequence reduces to
$$
\underset{\cong 0}{H_*(N)}
\stackrel{\pi_*}{\longrightarrow} \underset{= H^{S^1}_*(N)}{H_*(N\times_{S^1}ES^1)}\stackrel{\cap e}{\longrightarrow}\underset{= H^{S^1}_{*-2}(N)}{H_{*-2}(N\times_{S^1}ES^1) }
\longrightarrow
\underset{\cong 0}{H_{*-1}(N)}
$$
for $1<*<2n+1$.
This shows that $H_*^{S^1}(N;\Q) \cong H_*(\C \P^n;\Q)$ for $*<2n+1$.
To see that there are no other terms, we shall argue that $H_*^{S^1}(N;\Q)$ is bounded.
For this, choose an $S^1$-equivariant Morse--Bott function $f \colon N \to \R$,
see \cite[Lemma 4.8]{W} for the existence of such a function.
Define a Morse--Bott function
\begin{align*}
\tilde f \colon N \times_{S^1} ES^{1} & \longrightarrow \R \\
[x,v] & \longmapsto f(x).
\end{align*}
Consider the Morse--Bott spectral sequence for $H_*(N \times_{S^1} ES^{1};\Q)$
with respect to the Morse--Bott function~$\tilde f$.
Its $E^2$-page is given by $E^2_{pq} = H_q(R_p;\Q)$, where $R_p$ are the critical manifolds of $\tilde f$ with index~$p$.
Again, by \cite[Section 7.2.2]{FOOO} this sequence converges to $H_*(N \times_{S^1} ES^{1};\Q)$.
Note that the critical manifolds form infinite-dimensional lens spaces, so $H_q(R_p;\Q) \cong \Q$
if $q=0$ and $0$ otherwise.
Since there are only finitely many critical manifolds (because $N$ is compact),
it follows that $H_*^{S^1}(N;\Q)$ is bounded.

With this in mind, we reexamine the Gysin sequence.
Assume that $H_k^{S^1}(N;\Q)$ is non-zero for
some $k\geq 2n+1$.
Then $H_{k+2}^{S^1}(N;\Q)$ is non-zero either, etc.
Hence $H_*^{S^1}(N;\Q)$ is not bounded, which contradicts our previous term.
The lemma follows.
\end{proof}

\begin{remark}
Strictly speaking, $N \times_{S^1} ES^1$ has no manifold structure.
Recalling $ES^1 = S^\infty$,
we can, however, approximate this space by $N \times_{S^1} S^{2M+1}$ for large $M$.
For the latter space, the above argument works, and can be adapted to show triviality of
$H_i (N \times_{S^1} S^{2M+1};\Q)$ for $i \geq 2n+1$ and $i<2M$.
\end{remark}

\begin{proposition}
\label{prop:mean_euler_Brieskorn}
The Brieskorn manifold $\Sigma(a_0,\ldots,a_n)$ with its natural contact form~$\alpha$
is index-positive if $\sum_j \frac{1}{a_j} >1$, and index-negative if $\sum_j \frac{1}{a_j} <1$.
Furthermore, if the exponents $a_0,\ldots,a_n$ are pairwise relatively prime,
then the mean Euler characteristic of $\Sigma(a_0,\ldots,a_n)$ is given by
\begin{equation}
\label{eq:mean_euler}
\chi_m(\Sigma(a_0,\ldots,a_n),\alpha) \,=\,
(-1)^{n+1}\,
\frac{n+(n-1) \sum_{i_0}(a_{i_0}-1)+\ldots + 1 \sum_{i_0<\ldots< i_{n-2} }(a_{i_0}-1)\cdots (a_{i_{n-2}}-1)
}{
2 | (\sum_j a_0 \cdots \widehat{a_j}\cdots a_n)  -a_0\cdots a_n|
}
\end{equation}
\end{proposition}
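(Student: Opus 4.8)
The plan is to apply Proposition~\ref{prop:mean_euler_S^1-orbibundle} to the Brieskorn manifold $(\Sigma(a_0,\ldots,a_n),\alpha)$, whose Reeb flow we have already shown to be periodic, so that $\Sigma$ is a genuine Boothby--Wang orbibundle. First I would settle the index-positivity/negativity dichotomy. Since the principal (regular) orbit has period $T_k = \lcm$ of the individual periods, and the Maslov index $\mu_P$ of a principal orbit is computed from the Reeb flow $\phi^{R_\alpha}_t(z) = (e^{4it/a_0}z_0,\ldots,e^{4it/a_n}z_n)$, one checks that the sign of $\mu_P$ is governed by the defect $\sum_j \frac{1}{a_j} - 1$: the rotation rates $4/a_j$ accumulate a total Maslov contribution proportional to $\sum_j \frac{1}{a_j}$ against a correction of $-1$ coming from the radial/Liouville direction. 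Thus $\mu_P > 0$ exactly when $\sum_j \frac{1}{a_j} > 1$, giving index-positivity by the second bullet of Proposition~\ref{prop:mean_euler_S^1-orbibundle}, and index-negativity in the opposite case.

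For the formula itself I would specialize the mean-Euler-characteristic formula of Proposition~\ref{prop:mean_euler_S^1-orbibundle}. The hypotheses must first be verified: when the $a_j$ are pairwise relatively prime, $\Sigma$ is an integral homology sphere (Proposition~\ref{s5:prop1}), hence so are the orbit spaces $N_T$ (which are themselves Brieskorn-type links of lower-dimensional strata, still carrying a fixed-point-free $S^1$-action), so Lemma~\ref{lemma:S^1-euler_characteristic} applies and gives $\chi^{S^1}(N_T) = m+1$ where $2m+1 = \dim N_T$; the vanishing $H^1(N_T\times_{S^1}ES^1;\Z_2)=0$ follows from Lemma~\ref{lemma:H^1_trivial} together with the simply-connectedness; and there are no bad orbits since $\Sigma$ is simply connected. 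I would then identify the orbit strata $N_T$ combinatorially: a point $z$ lies on an orbit of sub-principal period precisely when some subset of its coordinates vanishes, so the strata are indexed by subsets $I \subsetneq \{0,\ldots,n\}$ of the vanishing coordinates, and the period of the stratum $\{z_i = 0 : i \in I\}$ is controlled by the $\lcm$ of the $a_j$ for $j \notin I$.

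The main work, and the step I expect to be the genuine obstacle, is the bookkeeping: matching the abstract sum $\sum_{i=1}^k (-1)^{\mu(S_{T_i}) - \frac12 \dim S_{T_i}}\, \phi_{T_i;T_{i+1},\ldots,T_k}\, \chi^{S^1}(N_T)$ against the explicit numerator of~\eqref{eq:mean_euler}. The counting function $\phi$ records how many multiple covers of a given stratum appear within one period without lying in a larger stratum; with pairwise coprime exponents this becomes an inclusion--exclusion count over which coordinates are forced to vanish, and one must show it produces precisely the elementary-symmetric combinations $\sum_{i_0<\cdots<i_{n-2}}(a_{i_0}-1)\cdots(a_{i_{n-2}}-1)$ weighted by the dimension-descending coefficients $n, n-1, \ldots, 1$. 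The parity factor $(-1)^{\mu(S_T)-\frac12\dim S_T}$ should reduce to the global sign $(-1)^{n+1}$ by an iteration argument analogous to the concatenation additivity of the Maslov index used in Proposition~\ref{prop:mean_euler_S^1-orbibundle}. Finally the denominator $|\mu_P|$ must be computed as $|(\sum_j a_0\cdots\widehat{a_j}\cdots a_n) - a_0\cdots a_n|$, which I would obtain by clearing denominators in $\sum_j \frac{1}{a_j} - 1$ and multiplying through by $a_0\cdots a_n$; the resulting integer is exactly the stated expression, up to the overall factor of $2$ built into the definition of $\chi_m$.
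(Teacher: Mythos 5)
Your overall strategy is exactly the paper's: feed the Brieskorn data into Proposition~\ref{prop:mean_euler_S^1-orbibundle}, read off index-definiteness from the sign of $\mu_P$, and identify the orbit strata with sub-Brieskorn links indexed by the non-vanishing coordinates so that Lemma~\ref{lemma:S^1-euler_characteristic} supplies the equivariant Euler characteristics. However, the proposal stops short of a proof at precisely the point you yourself flag as ``the genuine obstacle'': the determination of the counting functions $\phi_{T_i;T_{i+1},\ldots,T_k}$ and the verification that they assemble into the numerator of~\eqref{eq:mean_euler}. This is not a routine verification to be waved at --- it is the content of the proposition. The paper resolves it by a downward induction on the cardinality of the index set $I$: within one period $a_0\cdots a_n$ of the principal orbit, the covers of $N_I$ occur $\prod_{j\notin I}a_j$ times, and after discarding those covers already contained in a larger stratum one finds that $N_I$ contributes exactly $\prod_{j\notin I}(a_j-1)$ times. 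Pairing this with $\chi^{S^1}(N_I)=|I|-1$ (from $\dim N_I = 2|I|-3$) produces the coefficients $n, n-1,\ldots,1$ on the elementary symmetric combinations. Without this count (or an equivalent inclusion--exclusion argument) the formula is asserted, not derived. Similarly, the claim that the common parity of $\mu(S_{T})-\tfrac12\dim S_{T}$ is that of $n+1$ is deferred to ``an iteration argument''; the paper obtains it from the explicit index formula (3.1) of~\cite{vK2}.

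A second, more localized error: you justify the absence of bad orbits by the simple connectivity of $\Sigma$. That does not follow --- a bad orbit is an $m$-fold cover $\gamma$ of $\gamma'$ with $\mu(\gamma)-\mu(\gamma')$ odd, which is a property of the linearized Reeb flow along the orbit and has nothing to do with $\pi_1(\Sigma)$; simply connected contact manifolds can perfectly well carry bad orbits. The correct justification here is again the explicit Maslov index computation for Brieskorn manifolds in~\cite{vK}, which shows that all strata contribute in degrees of a single parity, so no odd index jumps between an orbit and its covers can occur. Relatedly, your heuristic for the sign of $\mu_P$ (``rotation rates $4/a_j$ against a correction of $-1$ from the radial direction'') gestures at the right answer but is not an argument; the clean route is to quote the computed value $\mu_P = 2\operatorname{lcm}_i a_i\bigl(\sum_j \tfrac{1}{a_j}-1\bigr)$ from~\cite{vK}, whose sign is manifestly that of $\sum_j\tfrac1{a_j}-1$ and which, after clearing denominators using coprimality, also gives the stated denominator.
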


\begin{proof}
The proof is a direct application of Proposition~\ref{prop:mean_euler_S^1-orbibundle}.
The principal orbits have period $a_0 \cdots a_n$.
Exceptional orbits have periods $a_0, \ldots, a_n,\, a_0 a_1, \ldots, a_{n-1}a_n, \ldots, a_1 \cdots a_n$.
Given a collection of exponents $I=\{ a_{i_1},\ldots,a_{i_k} \}\subset \{ a_0,\ldots, a_n \}$ we denote the associated subset of
periodic orbits with period $a_{i_1}\cdots a_{i_k}$ by $N_I$.

In~\cite{vK} the Maslov index of all periodic Reeb orbits is computed.
For the principal orbit, the result is
$$
\mu_P \,:=\, 2 \lcm_{i}a_i \left( \sum_j \frac{1}{a_j} -1 \right)
\,=\, 2 \sum_j a_0 \cdots \widehat{a_j}\cdots a_n   -a_0\cdots a_n.
$$
We check that the conditions of Proposition~\ref{prop:mean_euler_S^1-orbibundle} are satisfied.
By Proposition~\ref{s5:prop1} it follows that $H^1(N_I;\Z_2)=0$ if the index set~$I$ has more than~$2$ elements (i.e.~$\dim N_T>1$), so Lemma~\ref{lemma:H^1_trivial} applies.
Furthermore, the index computations in~\cite{vK} show that there are no bad orbits.

Hence Proposition~\ref{prop:mean_euler_S^1-orbibundle} applies, so $\Sigma(a_0,\ldots,a_n)$ is index-positive if $\sum_j \frac{1}{a_j} >1$ and index-negative if $\sum_j \frac{1}{a_j} <1$.
Furthermore, the $S^1$-equivariant Euler characteristics needed in Proposition~\ref{prop:mean_euler_S^1-orbibundle} are obtained from Lemma~\ref{lemma:S^1-euler_characteristic}.

The formula for the Maslov index of the exceptional orbits is slightly more complicated,
see Formula~(3.1) from~\cite{vK2},
but we only need to observe that the parity of $\mu(S_{T_i})-\frac 12 \dim S_{T_i}$
is the same as the one of~$n+1$.

We conclude the proof by determining the coefficients $\phi_{T_i;T_{i+1},\ldots,T_n}$.
We shall do this by counting how often multiple covers of an orbit space appear in one period.
The full orbit space $S_{\{a_0,\ldots,a_n\}}$ appears once.
The orbit space $S_{\{a_0,\ldots,a_{n-1}\}}$ appears $a_n$~times, but the last time it contributes,
it is part of $S_{\{a_0,\ldots,a_n\}}$, which we already considered.
Therefore $S_{\{a_0,\ldots,a_{n-1}\}}$ contributes $a_n-1$~times.
By downwards induction on the cardinality of~$I$, we conclude that $S_{ I}$ appears
$\prod_j (a_j-1) / \prod_{a\in I}(a-1)$ times in one period.
\end{proof}

\begin{remark}
The mean Euler characteristic of $S^1$-equivariant symplectic homology coincides with the mean Euler characteristic of contact homology.
This means that the above computation amounts to an application of the algorithm in~\cite{vK2}.
However, there are still many issues with the foundations of contact homology, so we shall not pursue this line of thought.
\end{remark}

\section{Proof of Corollary~B. }
We start by some general observations that will be needed in the proof.

For $n \in \mathbb{N}$ we define
$$f(n) \,=\, \sum_{j=0}^n (-1)^j(n-j){n+1 \choose j}.$$
We claim the following identity
\begin{equation}\label{euler}
f(n) \,=\, (-1)^{n+1}
\end{equation}

We prove \eqref{euler} by induction.
It holds that $f(1)=1$, and for the induction step we compute
\begin{eqnarray*}
f(n+1)&=&\sum_{j=0}^{n+1}(-1)^j(n+1-j){n+2 \choose j}\\
&=&\sum_{j=0}^n(-1)^j(n+1-j){n+2 \choose j}\\
&=&\sum_{j=0}^n(-1)^j(n+1-j)\Bigg({n+1 \choose j}+{n+1 \choose
j-1}\Bigg)\\
&=&\sum_{j=0}^n(-1)^j(n+1-j){n+1 \choose j}+
\sum_{j=0}^n(-1)^j(n+1-j){n+1 \choose j-1}\\
&=&\sum_{j=0}^n(-1)^j(n-j){n+1 \choose j}+ \sum_{j=0}^n(-1)^j {n+1
\choose j}+ \sum_{j=1}^n(-1)^j(n-(j-1)){n+1 \choose j-1}\\
&=&(-1)^{n+1}+ \sum_{j=0}^{n+1}(-1)^j {n+1 \choose j}-(-1)^{n+1} +
\sum_{j=0}^{n-1}(-1)^{j+1}(n-j){n+1 \choose j}\\
&=&(-1)^{n+1}+(1-1)^{n+1}-(-1)^{n+1}-(-1)^{n+1}\\
&=&-(-1)^{n+1}\\
&=&(-1)^{n+2}.
\end{eqnarray*}
This proves the induction step and hence \eqref{euler} follows.

Alternatively, we can compute
\begin{eqnarray*}
0 \,=\,
\frac{d}{dx}(-1+x)^{n+1}|_{x=1}&=&\sum_{j=0}^n(-1)^j(n+1-j)x^{n-j}{n+1 \choose j}|_{x=1} \\
&=& f(n)+\sum_{j=0}^n(-1)^j {n+1 \choose j}+(-1)^{n+1}{n+1 \choose n+1}-(-1)^{n+1} \\
&=& f(n)+(-1+1)^{n+1}-(-1)^{n+1}=f(n)-(-1)^{n+1}.
\end{eqnarray*}

\begin{proposition}
\label{prop:mean_euler=1}
Let $\Sigma(a_0,\ldots,a_n)$ be a Brieskorn manifold whose exponents are pairwise relatively prime.
Suppose that $\sum_j \frac{1}{a_j}>1$.
Then $\chi_m (\Sigma(a_0,\ldots,a_n),\alpha) = \frac{(-1)^{n+1}}{2}$ if and only if one of the exponents
is equal to~$1$.
\end{proposition}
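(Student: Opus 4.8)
The plan is to read off the numerator and the denominator of formula~\eqref{eq:mean_euler} from a single polynomial. Writing $b_j=a_j-1$, set
$$
p(x) \,=\, \prod_{j=0}^n (x+b_j) \,=\, \sum_{k=0}^{n+1} e_k(b)\, x^{\,n+1-k},
$$
where $e_k(b)$ is the $k$-th elementary symmetric polynomial in $b_0,\ldots,b_n$. Since the exponents are pairwise relatively prime we have $\lcm_i a_i=a_0\cdots a_n$, and evaluating the polynomial and its derivative at $x=1$ gives $p(1)=a_0\cdots a_n$ and $p'(1)=\sum_j a_0\cdots\widehat{a_j}\cdots a_n$. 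Comparing with Proposition~\ref{prop:mean_euler_Brieskorn} we thus get $\mu_P/2=p'(1)-p(1)$. The hypothesis $\sum_j\frac1{a_j}>1$ forces $\mu_P>0$, so the denominator of~\eqref{eq:mean_euler} equals $\mu_P$ and
$$
\chi_m(\Sigma(a_0,\ldots,a_n),\alpha) \,=\, (-1)^{n+1}\,\frac{\mathrm{Num}}{\mu_P},
$$
where $\mathrm{Num}$ denotes the numerator of~\eqref{eq:mean_euler}.

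First I would identify $\mathrm{Num}=\sum_{k=0}^{n-1}(n-k)\,e_k(b)$, which is just a rewriting of~\eqref{eq:mean_euler}, since the coefficient $(n-k)$ multiplies $e_k(b)=\sum_{i_0<\cdots<i_{k-1}}(a_{i_0}-1)\cdots(a_{i_{k-1}}-1)$ and the $k=n$ term vanishes. Using $p'(1)=\sum_{k=0}^n(n+1-k)e_k(b)$ and $p(1)=\sum_{k=0}^{n+1}e_k(b)$ I then compute
$$
\mathrm{Num} \,=\, \sum_{k=0}^n(n-k)e_k(b) \,=\, p'(1)-\bigl(p(1)-e_{n+1}(b)\bigr) \,=\, \frac{\mu_P}{2} + \prod_{j=0}^n(a_j-1).
$$
Substituting this into the expression for $\chi_m$ yields the clean formula
$$
\chi_m(\Sigma(a_0,\ldots,a_n),\alpha) \,-\, \frac{(-1)^{n+1}}{2} \,=\, (-1)^{n+1}\,\frac{\prod_{j=0}^n(a_j-1)}{\mu_P},
$$
which, since $\mu_P>0$, vanishes precisely when $\prod_j(a_j-1)=0$, that is, when some $a_j=1$. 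As a sanity check, the same manipulation applied formally at $b_j=-1$ collapses to the identity $f(n)=(-1)^{n+1}$ of~\eqref{euler}, so~\eqref{euler} is exactly the degenerate template of the displayed identity.

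The one delicate point, which I regard as the main obstacle, is that the product formula~\eqref{eq:mean_euler} was derived under the tacit assumption that all $a_j\ge2$: when some $a_j=1$ the multiplicity $\phi_{T_i;T_{i+1},\ldots,T_k}$ appearing in the proof of Proposition~\ref{prop:mean_euler_Brieskorn} degenerates for the orbit spaces meeting the $z_j$-axis. I would therefore prove the two implications separately. For ``$\chi_m=\tfrac{(-1)^{n+1}}{2}\Rightarrow$ some $a_j=1$'' I argue by contraposition: assuming all $a_j\ge2$, formula~\eqref{eq:mean_euler} applies and $\prod_j(a_j-1)\ge1>0$, so by the displayed identity $\chi_m\neq\tfrac{(-1)^{n+1}}{2}$. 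For the converse, if some $a_j=1$ then by Remark~\ref{rem:exponent=1} the manifold $(\Sigma,\alpha)$ is contactomorphic to the standard sphere $(S^{2n-1},\alpha_0)$; its mean Euler characteristic is $\tfrac{(-1)^{n+1}}{2}$, as one sees by applying Proposition~\ref{prop:mean_euler_S^1-orbibundle} to the single Hopf orbit space, for which $\chi^{S^1}(S^{2n-1})=n$ by Lemma~\ref{lemma:S^1-euler_characteristic} and $\mu_P=2n$, together with the independence of the mean Euler characteristic from the filling (Corollary~\ref{cor:mec_invariant}). Since $\sum_j\frac1{a_j}>1$ holds automatically once some $a_j=1$, the two cases together cover the hypotheses of the proposition and complete the proof.
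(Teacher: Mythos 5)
Your argument is correct, and the central identity
$$
\mathrm{Num} \;=\; \sum_{k=0}^{n}(n-k)\,e_k(a-1) \;=\; p'(1)-p(1)+e_{n+1}(a-1)\;=\;\tfrac{\mu_P}{2}+\prod_{j=0}^n(a_j-1)
$$
checks out (e.g.\ for $(a_0,a_1,a_2)=(2,3,5)$ one gets $9=1+8$). The paper reaches the same endpoint, $\chi_m=\tfrac{(-1)^{n+1}}{2}\iff\prod_j(a_j-1)=0$, but by a different organization of the algebra: it expands the numerator in elementary symmetric polynomials of the $a_j$ themselves and then invokes the binomial identity $f(n)=\sum_j(-1)^j(n-j)\binom{n+1}{j}=(-1)^{n+1}$ ``repeatedly'' to collapse the resulting expression, whereas you expand in $e_k(a-1)$ and read everything off from $p(x)=\prod_j(x+a_j-1)$ and its derivative at $x=1$. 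Your route buys two things: it replaces the inductive identity \eqref{euler} by a one-line evaluation (and, as you note, \eqref{euler} is recovered as the degenerate case $b_j=-1$), and it yields the sharper quantitative statement $\chi_m-\tfrac{(-1)^{n+1}}{2}=(-1)^{n+1}\prod_j(a_j-1)/\mu_P$ rather than a bare equivalence. Your caution about the validity of \eqref{eq:mean_euler} when some $a_j=1$ (where the orbit-space stratification by periods degenerates and the counting functions $\phi$ become $0/0$) is well taken, and your resolution --- proving the forward implication by contraposition under $a_j\ge 2$ and the converse via Remark~\ref{rem:exponent=1} together with the standard-sphere computation and Corollary~\ref{cor:mec_invariant} --- is exactly the role played in the paper by the observation following the proposition, so nothing is missing.
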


\begin{proof}
The condition $\sum_j \frac{1}{a_j}>1$ implies that the denominator of~\eqref{eq:mean_euler}
(without $| \; |$)
is positive, so
$$
\chi_m (\Sigma(a_0,\ldots,a_n),\alpha) \,=\,  (-1)^{n+1} \,
\frac{n+(n-1)\sum_{i_0}(a_{i_0}-1)+\ldots
+\sum_{i_0<\ldots< i_{n-2} }(a_{i_0}-1)\cdots (a_{i_{n-2}}-1)
}{
2\left(  (\sum_j a_0 \cdots \widehat{a_j}\cdots a_n)   -a_0\cdots a_n \right)
}
.
$$
Let us now try to solve the equation $\chi_m= (-1)^{n+1}  \frac{1}{2}$.
We obtain
$$
n+(n-1)\sum_{i_0}(a_{i_0}-1)+\ldots
+\sum_{i_0<\ldots< i_{n-2} }(a_{i_0}-1)\cdots (a_{i_{n-2}}-1)
\,=\,
(\sum_j a_0 \cdots \widehat{a_j}\cdots a_n)   -a_0\cdots a_n .
$$
We multiply out all terms on the left hand side and organize them as linear combinations of elementary symmetric polynomials $e_d(a_0,\ldots,a_n)$ of degree~$d$, for $d=0,\ldots,n-2$.
Using Formula~\eqref{euler} repeatedly to obtain
$$
\sum_{k=0}^{n-2} (-1)^{n-2-k}
e_k(a_0,\ldots,a_n)
\,=\,
 e_{n-1}(a_0,\ldots,a_n)   -e_n(a_0,\ldots,a_n) .
$$
Moving all terms to the left hand side and collecting them yields the equation
$$
\prod_{j=0}^n(a_j-1)=0 ,
$$
which can only hold if one of the exponents is equal to~$1$.
\end{proof}

Observe that the remark after Theorem~\ref{s5:t} implies that the mean Euler characteristic has to be equal to~$  \frac{(-1)^{n+1}}{2}$ if one of the exponents equals~$1$.

\medskip
{\bf Proof of Corollary~B.}
Let $\Sigma(a_0,\ldots,a_n)$ be a Brieskorn manifold with pairwise relatively prime exponents $a_0,\ldots,a_n$.
If the exponents $a_0,\ldots,a_n$ satisfy $\sum_j \frac{1}{a_j}<1$, then Proposition~\ref{prop:mean_euler_Brieskorn} tells us that $(\Sigma(a_0,\ldots,a_n),\alpha)$ is index-negative.
Theorem~A implies that such manifolds do not admit a displaceable exact contact embedding.

If the exponents $a_0,\ldots,a_n$ are pairwise relatively prime, then $\sum_j \frac{1}{a_j} \neq 1$.
Indeed, suppose that $\sum_j \frac{1}{a_j}=1$.
Then
$$
\frac{1}{a_0} \,=\, 1-\sum_{j=1}^n \frac{1}{a_j} \,=\,
\frac{a_1 \cdots a_n - \sum_{j=1}^n a_1 \cdots \widehat{a_j} \cdots a_n}{a_1 \cdots a_n}.
$$
If we invert the left and right hand side, we see that $a_0$ divides $a_1\cdots a_n$,
which shows that $a_0,\ldots,a_n$ are not pairwise relatively prime.
This leaves the case that $\sum_j \frac{1}{a_j}>1$.
For this case, Proposition~\ref{prop:mean_euler=1} applies, so together with Theorem~A we conclude that non-trivial Brieskorn manifolds with pairwise relatively prime exponents do not admit exact displaceable contact embeddings.

{\bf Acknowledgment.} We thank Kai Cieliebak and Alex Oancea for
useful discussions which inspired Step~1 of the proof of
Lemma~\ref{disp}. The second author heartily thanks Seoul National
University for its warm hospitality during a beautiful week in
February~2011. The first author was partially supported by the Basic
Research fund 2010-0007669 funded by the Korean government, the
second author by SNF grant 200021-125352/1 and 
the third author by the New Faculty Research
Grant 0409-20100147 funded by the Korean government.


\begin{thebibliography}{999}
\bibitem{BO1}
F.~Bourgeois, A.~Oancea,
\emph{Symplectic homology, autonomous Hamiltonians, and Morse--Bott moduli spaces},
Duke Math.\,J \textbf{146} (2009) 71--174.

\bibitem{BO}
F.~Bourgeois, A.~Oancea,
\emph{The Gysin exact sequence for $S^1$-equivariant symplectic homology},
arXiv:0909.4526.

\bibitem{CF}
K.~Cieliebak, U.~Frauenfelder,
\emph{A Floer homology for exact contact embeddings},
Pacific J.\,Math.~\textbf{239} (2009) 251--316.

\bibitem{E}
J.~Espina,
\emph{On the mean Euler characteristic of contact manifolds},
arXiv:1011.4364

\bibitem{FOOO}
K.~Fukaya, Y.~Oh, H.~Ohta, K.~Ono,
\emph{Lagrangian intersection Floer Theory - Anomaly and Obstruction},
AMS/IP Studies in Advanced Mathematics vol 46.1 (2009).

\bibitem{G}
H.~Geiges,
\emph{An introduction to contact topology},
Cambridge Studies in Advanced Mathematics, 109.
Cambridge University Press, Cambridge, 2008.

\bibitem{GK}
V.~Ginzburg, E.~Kerman,
\emph{Homological resonances for Hamiltonian diffeomorphisms and Reeb flows},
Int. Math. Res. Not. IMRN 2010, 53--68.

\bibitem{HM}
F.~Hirzebruch, K.H.~Mayer,
\emph{{${\rm O}(n)$}-{M}annigfaltigkeiten, exotische Sph\"aren und Singularit\"aten},
Lecture Notes in Mathematics, No.~57, 1968.

\bibitem{R}
A.~Ritter,
\emph{Topological quantum field theory structure on symplectic homology},
arXiv:1003.1781.

\bibitem{SZ}
D.~Salamon, E.~Zehnder,
\emph{Morse theory for periodic solutions of Hamiltonian systems and the Maslov index},
Comm. Pure Appl. Math.~{\bf 45} (1992), no. 10, 1303â1360.

\bibitem{vK}
O.~van Koert,
\emph{Open books for contact five-manifolds and applications of contact homology},
Inaugural-Dissertation zur Erlangung des Doktorgrades der
Mathematisch-Naturwissenschaftlichen Fakult\"at der Universit\"at zu K\"oln (2005).
http://kups.ub.uni-koeln.de/1540/

\bibitem{vK2}
O.~van Koert,
\emph{Contact homology of Brieskorn manifolds},
Forum Math.~{\bf 20} (2008) 317--339.

\bibitem{V1}
C.~Viterbo,
\emph{Equivariant Morse Theory for Starshaped Hamiltonian Systems},
Trans.\,Amer.\,Math.\,Soc.~\textbf{311}, (1989), no.\,2, 621--655.

\bibitem{V}
C.~Viterbo,
\emph{Functors and computations in Floer homology with applications.\,I.}
GAFA \textbf{9} (1999), 985--1033.

\bibitem{W}
A.~Wasserman,
\emph{Equivariant differential topology.}
Topology \textbf{8} (1969), 127--150.

\end{thebibliography}
\end{document}